\newtheorem{thm}{Theorem}
\newtheorem{rmk}{Remark}
\newtheorem{lem}{Lemma}
\newtheorem{prop}{Proposition}
\newcommand{\R}{\mathbb R}
\begin{document}

\title{The equidistribution of Fourier coefficients of half integral weight modular forms on the plane}


\titlerunning{The equidistribution of Fourier}        

\author{Soufiane Mezroui}


\institute{Soufiane Mezroui \at
           LabTIC,\\
           SIC Department,\\
           ENSAT,\\
           Abdelmalek Essaadi University,\\
           Tangier, Morocco\\
           \email{mezroui.soufiane@yahoo.fr}
           }

\maketitle

\begin{abstract}
Let $f=\sum_{n=1}^{\infty}a(n)q^{n}\in S_{k+1/2}(N,\chi_{0})$ be a non-zero cuspidal Hecke eigenform of weight $k+\frac{1}{2}$ and the trivial nebentypus $\chi_{0}$ where the Fourier coefficients $a(n)$ are real. Bruinier and Kohnen conjectured that the signs of $a(n)$ are equidistributed. This conjecture was proved to be true by Inam, Wiese and Arias-de-Reyna for the subfamilies $\{a(t n^{2})\}_{n}$ where $t$ is a squarefree integer such that $a(t)\neq 0$. Let $q$ and $d$ be natural numbers such that $(d,q)=1$. In this work, we show that $\{a(t n^{2})\}_{n}$ is equidistributed over any arithmetic progression $n\equiv d\text{ mod }q$.
\keywords{Shimura lift\and Fourier coefficients\and Half-integral weight\and Sato-Tate equidistribution}
\subclass{11F30\and 11F37}
\end{abstract}

\section{Introduction}

Let $k\geq 2$, $4|N$ be integers, $\chi\pmod N$ a Dirichlet character, and let \\
$f=\sum_{n=1}^{\infty}a(n)q^{n}\in S_{k+1/2}(N,\chi)$ be a non-zero cuspidal Hecke eigenform of weight $k+\frac{1}{2}$. Applying the Shimura lift to $f$ for a fixed squarefree $t$ such that $a(t)\neq 0$, we get $F_{t}=\sum_{n=1}^{\infty}A_{t}(n)q^{n}\in S_{2k}(N/2,\chi^{2})$ the Hecke eigenform of weight $2k$. 

When $\chi=1$, Bruinier and Kohnen suggested in \cite{bruin} that half of the coefficients $a(n)$ are positive among all non-zero Fourier coefficients. This suggestion was formulated later explicitly as a conjecture in \cite{kohnen}. Assuming some error term for the convergence of the Sato-Tate distribution for integral weight modular forms in \cite{ilker3}, Inam and Wise showed when $F_t$ has no CM that half of the coefficients $a(t n^{2})$ are positive. They formulated this result in terms of Dedekind-Dirichlet density. They also  showed with Arias-de-Reyna in \cite{ilker2}, that $(a(t n^{2}))_{n\in\mathbb{N}}$ are equidistributed when $F_t$ has CM and the equidistribution was reformulated in both CM and not CM cases using Dedekind-Dirichlet and natural densities. Later, those results were obtained in \cite{ilker1} by removing the error term assumption.  

The present work gives an improvement of the Bruinier-Kohnen conjecture. Indeed, under the error term hypothesis that we will explain below, our main result is the following theorem.

\begin{thm}\thlabel{thm71}
Assume the setting of the introduction and suppose that $F_t$ does not have complex multiplication. Let $q$ be a natural number. Suppose that for all Dirichlet characters $\varepsilon\pmod q$ and all roots of unity $\xi$ such that $\xi\in\,Im\,\varepsilon$, there are $C_{\varepsilon,\xi}>0$ and $\alpha_{\varepsilon ,\xi}>0$ such that 
\begin{equation}
\left|\frac{\#\left\{p\leq x\text{ prime}\mid p\nmid N, \varepsilon(p)=\xi, \frac{A_{t}(p)}{2 a(t)p^{\frac{k-1}{2}}\chi(p)}\in [a,b]\right\}}{\pi(x)}-\frac{\mu([a,b])}{\# Im\,\varepsilon}\right| \leq \frac{C_{\varepsilon,\xi}}{x^{\alpha_{\varepsilon,\xi}}}. 
\end{equation}
Then for all integers $d$, $(d,q)=1$, the sets 
\begin{multline}
\left\{n\in\mathbb{N}\mid (n,N)=1, n\equiv d\text{ mod }q, \frac{a(t n^{2})}{\chi(n)}>0\right\}\text{ and }\\
\left\{n\in\mathbb{N}\mid (n,N)=1, n\equiv d\text{ mod }q,\frac{a(t n^{2})}{\chi(n)}<0\right\}
\end{multline}
have equal positive natural densities and both are half of the natural density of 
\begin{equation}\label{t1t}
\left\{n\in\mathbb{N}\mid (n,N)=1, n\equiv d\text{ mod }q, \frac{a(t n^{2})}{\chi(n)}\neq 0\right\}.
\end{equation} 
\end{thm}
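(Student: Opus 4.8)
The plan is to transport the conditional Inam--Wiese argument of \cite{ilker3} to each residue class $n\equiv d\pmod q$ separately, with hypothesis (1) supplying the effective Sato--Tate input in that class. Set $c(n):=a(tn^{2})/(a(t)\chi(n))$; for $(n,N)=1$ this is a real-valued multiplicative function of $n$ (part of the setup carried over from \cite{ilker1,ilker2}), and the theorem amounts to: among $n\equiv d\pmod q$ with $(n,N)=1$, the events $c(n)>0$ and $c(n)<0$ have equal positive natural densities, each half the density of $\{c(n)\ne 0\}$. I will use $\mathbf 1[c(n)>0]=\tfrac12\big(\mathbf 1[c(n)\ne 0]+\operatorname{sign}c(n)\big)$, both summands multiplicative, and route the congruence through Dirichlet characters mod $q$ (which, unlike the congruence, are multiplicative).

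Two ingredients are needed. First, a prime dictionary: from the Shimura relation $a(tp^{2})/a(t)=\lambda_{F_t}(p)-\chi(p)\big(\tfrac{(-1)^{k}t}{p}\big)p^{k-1}$ for $p\nmid N$, Deligne's bound on $\lambda_{F_t}(p)$, and the fact that the Hecke field of $f$ is a fixed number field, one isolates a set $E$ of primes with $\#\{p\le x:\ p\in E\}=O(x^{1-c})$ for some $c>0$, outside of which $\operatorname{sign}c(p)=\operatorname{sign}x_{p}$ and $c(p^{j})\ne 0$ for all $j\ge 1$, where $x_{p}\in[-1,1]$ is the normalized $p$-th Hecke eigenvalue of $F_t$ (twisted by $\overline{\chi(p)}$ so as to be real), i.e.\ the quantity of (1); explicitly $E$ collects the primes with $|x_{p}|$ below a fixed negative power of $p$ (where the correction term in the Shimura relation can dominate, or $c(p)$ vanish) together with the primes where $c(p^{j})=0$ for some $j$ --- the thinness of the latter being the ``uniformity in $j$'' point, handled as in \cite{ilker1,ilker2} via the fixed Hecke field, and the thinness of the former being where the \emph{effective} form of Sato--Tate is indispensable (plain Sato--Tate gives only density zero). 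Second, an effective Sato--Tate law inside a residue class: for $(\ell,q)=1$, orthogonality gives $\mathbf 1[p\equiv\ell\bmod q]=\tfrac1{\phi(q)}\sum_{\varepsilon\bmod q}\overline{\varepsilon(\ell)}\varepsilon(p)$ and $\varepsilon(p)=\sum_{\xi\in\operatorname{Im}\varepsilon}\xi\,\mathbf 1[\varepsilon(p)=\xi]$, so substituting these into (1) and adding the finitely many error terms (put $\alpha:=\min_{\varepsilon,\xi}\alpha_{\varepsilon,\xi}>0$) yields, for every $[a,b]\subseteq[-1,1]$,
\begin{equation*}
\#\{p\le x:\ p\nmid Nq,\ p\equiv\ell\pmod q,\ x_{p}\in[a,b]\}=\Big(\tfrac{\mu([a,b])}{\phi(q)}+O(x^{-\alpha})\Big)\pi(x);
\end{equation*}
since the Sato--Tate measure is symmetric about $0$, this gives the estimate the whole argument rests on, $\sum_{p\le x}\varepsilon(p)\operatorname{sign}(x_{p})=O(x^{1-\alpha})$ for every $\varepsilon\bmod q$ (the main terms cancel), and it is also this effective count that makes the ``near-zero'' part of $E$ thin.

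Now the extraction. Inserting $\mathbf 1[n\equiv d\bmod q]=\tfrac1{\phi(q)}\sum_{\varepsilon}\overline{\varepsilon(d)}\varepsilon(n)$ reduces the two counts to the Dirichlet series $D^{+}_{\varepsilon}(s)=\sum_{n}\varepsilon(n)\mathbf 1[(n,N)=1]\mathbf 1[c(n)\ne 0]\,n^{-s}$ and $D^{-}_{\varepsilon}(s)=\sum_{n}\varepsilon(n)\mathbf 1[(n,N)=1]\operatorname{sign}c(n)\,n^{-s}$. Off $E$ (and off $Nq$) the Euler factor of $D^{+}_{\varepsilon}$ at $p$ is $(1-\varepsilon(p)p^{-s})^{-1}$, while the product of the finitely many bad factors and of the $E$-correction factors converges and is non-vanishing for $\operatorname{Re}s$ in a neighbourhood of $1$ (since $\#\{p\le x:p\in E\}=O(x^{1-c})$); thus $D^{+}_{\varepsilon}(s)=L(s,\varepsilon)H_{\varepsilon}(s)$ with $H_{\varepsilon}$ holomorphic and non-vanishing on $\operatorname{Re}s\ge 1$, so $D^{+}_{\varepsilon_{0}}$ has a simple pole at $s=1$ with positive residue and $D^{+}_{\varepsilon}$ ($\varepsilon\ne\varepsilon_{0}$) is regular there. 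For $D^{-}_{\varepsilon}$, $\log D^{-}_{\varepsilon}(s)=\sum_{p}\varepsilon(p)\operatorname{sign}c(p)\,p^{-s}+O(1)$ for $\operatorname{Re}s>\tfrac12$; replacing $\operatorname{sign}c(p)$ by $\operatorname{sign}x_{p}$ off $E$, the main piece is holomorphic for $\operatorname{Re}s>1-\alpha$ by the estimate of the previous paragraph and partial summation, and the $E$-piece likewise, so $D^{-}_{\varepsilon}$ extends past $\operatorname{Re}s=1$ with no pole, for every $\varepsilon$. A Wiener--Ikehara/Delange-type Tauberian theorem --- applicable since $|\varepsilon(n)\operatorname{sign}c(n)|\le\mathbf 1[c(n)\ne 0]$, whose Dirichlet series has the right shape --- then gives $\sum_{n\le x,\,n\equiv d}\mathbf 1[c(n)\ne 0]\sim c_{d}x$ with $c_{d}>0$ and $\sum_{n\le x,\,n\equiv d}\operatorname{sign}c(n)=o(x)$. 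Combining via $\mathbf 1[c(n)>0]=\tfrac12(\mathbf 1[c(n)\ne0]+\operatorname{sign}c(n))$ gives $\#\{n\le x:n\equiv d,\,c(n)>0\}=\tfrac12 c_{d}x+o(x)=\#\{n\le x:n\equiv d,\,c(n)<0\}$, and unwinding $c$ back to $a(tn^{2})/\chi(n)$ (absorbing $\operatorname{sign}a(t)$) is the theorem.

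The step I expect to be the real obstacle is not the analytic wrap-up but the thin-set bookkeeping once the congruence is present: because $n\equiv d\pmod q$ is not multiplicative it must be routed through the $\varepsilon$, and then every estimate borrowed from \cite{ilker3,ilker1,ilker2} --- the count of primes with $x_{p}$ near $0$, the ``uniformity in $j$'' control of $\{p:c(p^{j})=0\}$, the convergence and non-vanishing of the $E$-correction products --- must be re-run carrying the extra $\varepsilon$-twist; this is feasible precisely because (1) packages the effective Sato--Tate law jointly with the value of $\varepsilon(p)$, but making everything uniform in $\varepsilon$ (and in $\xi$) is where the actual work lies. A secondary technical point is to use a form of the Tauberian theorem robust enough to accept the merely-continuous (rather than power-saving) boundary behaviour of $D^{+}_{\varepsilon}$ on $\operatorname{Re}s=1$.
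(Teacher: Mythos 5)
Your proposal is sound in outline and shares the paper's skeleton --- orthogonality of Dirichlet characters mod $q$ to route the congruence, multiplicativity of the sign function, a log/Euler-product analysis of the twisted Dirichlet series near $\mathrm{Re}(z)=1$, and a Tauberian conclusion --- but the key inputs are genuinely different. The paper splits the statement into a difference and a sum: the difference $\#\{c(n)>0\}-\#\{c(n)<0\}$ in each class is killed by Delange's mean-value theorem, which needs only that $\mathbb{P}_{\varepsilon,\xi,>}$ and $\mathbb{P}_{\varepsilon,\xi,<}$ have equal natural density, i.e.\ only Wong's Chebotarev--Sato--Tate law (\thref{cor2}, \thref{thm4}) with no error term; the error hypothesis is reserved for the sum, where it makes the relevant prime sets regular (\thref{th5}) so that $\sum_n f(n)^2\varepsilon(n)n^{-z}$ has the pole-plus-holomorphic shape required by Wiener--Ikehara (\thref{leme1}, \thref{thmdens}). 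You instead spend the effective hypothesis everywhere: the power-saving cancellation $\sum_{p\le x}\varepsilon(p)\operatorname{sign}(x_p)=O(x^{1-\alpha})$ together with a thin exceptional set of primes with $x_p$ near $0$, yielding holomorphy of $D^{-}_{\varepsilon}$ beyond the $1$-line and the factorization $D^{+}_{\varepsilon}=L(s,\varepsilon)H_{\varepsilon}(s)$, then Wiener--Ikehara for both halves. That is heavier than necessary for the difference, but legitimate under the stated hypothesis and quantitative where the paper's Delange step is not. One claim you should drop or repair: that outside a thin set $E$ one has $c(p^{j})\neq 0$ for \emph{all} $j\ge 1$, ``handled as in \cite{ilker1,ilker2}'' --- no such uniform-in-$j$ thinness is proved there and it is doubtful as stated; fortunately it is also unnecessary, since prime-power terms only perturb each Euler factor by $1+O(p^{-2\sigma})$, an absolutely convergent and non-vanishing correction on $\mathrm{Re}(s)>\frac12$ (exactly what the log-trick of \cite[Lemma 2.1.2]{ilker2}, which both you and the paper invoke, is designed to absorb), so only the primes with $c(p)=0$ matter --- these lie in your near-zero set and are thin by the effective hypothesis, and in the paper they are the regular density-zero set $\mathbb{P}_{\varepsilon,\xi,=0}$.
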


We discuss here two aspects of this theorem. Consider first the case when $\chi=1$ and the coefficients $a(n)$ are real. Then for all natural numbers $q$ and $d$ such that $(d,q)=1$, we have
$$
\lim_{x\rightarrow +\infty}\frac{\#\left\{n\leq x\mid n\equiv d\text{ mod }q, a(t n^{2})\gtrless 0\right\}}{\#\left\{n\leq x\mid n\equiv d\text{ mod }q, a(t n^{2})\neq 0\right\}}=\frac{1}{2}\cdot
$$
This extends the results obtained in \cite{ilker2,ilker3}, and therefore, one can ask if the Bruinier-Kohnen conjecture remains true over arithmetic progressions. We have no numerical experiments yet to support this hypothesis.

Consider now the general case $f\in S_{k+1/2}(N,\chi)$. Let $q$ be a natural number, $\varepsilon\text{ mod }q$ a Dirichlet character and $\xi\in\,Im\,\varepsilon$. From the main theorem above and since the density of the set \eqref{t1t} is independent of $d$ by \thref{rem1} and \thref{rem2}, the sets 
\begin{multline}
\left\{n\in\mathbb{N}\mid (n,N)=1, \varepsilon (n)=\xi, \frac{a(t n^{2})}{\chi(n)}>0\right\}\text{ and }\\
\left\{n\in\mathbb{N}\mid (n,N)=1, \varepsilon (n)=\xi,\frac{a(t n^{2})}{\chi(n)}<0\right\}
\end{multline}
have equal positive natural densities and both are half of the natural density of 
$$
\left\{n\in\mathbb{N}\mid (n,N)=1, \varepsilon (n)=\xi, \frac{a(t n^{2})}{\chi(n)}\neq 0\right\}.
$$
In the particular case $q=N$ and $\varepsilon=\chi$, we deduce that when $\xi\neq \pm i$, 
the sets 
\begin{multline}
\left\{n\in\mathbb{N}\mid \chi (n)=\xi, Re\left(a(t n^{2})\right)>0\right\}\text{ and }\\
\left\{n\in\mathbb{N}\mid \chi (n)=\xi, Re\left(a(t n^{2})\right)<0\right\}
\end{multline}
have equal positive natural densities and both are half of the natural density of 
$$
\left\{n\in\mathbb{N}\mid \chi (n)=\xi, a(t n^{2})\neq 0\right\}.
$$
Geometrically, the coefficients $a(t n^{2})$ with $\chi (n)=\xi$ belong to the same line and they are equidistributed over it. When $\xi=\pm i$, we obtain a similar result and the coefficients $a(t n^{2})$ with $\chi (n)=i$ or $-i$  are equidistributed over the vertical line that passes through $i$ and $-i$. Once again, one can ask more generally if the Fourier coefficients $a(n)$ with $(n,N)=1$, that belong to the same line, are equidistributed geometrically as above.  

\section{Notions of Density}

Recall that the set of primes (resp. the set of natural numbers) $S\subseteq \mathbb{P}$ (resp. $A\subseteq \mathbb{N}$) has a natural density $d(S)$ (resp. $d(A)$) if the limit $d(S)=\lim_{x\rightarrow +\infty}\frac{\pi_{S}(x)}{\pi(x)}$ (resp. $d(A)=\lim_{x\rightarrow +\infty}\frac{\#\{n\leq x\mid n\in A\}}{x}$) exists, where $\pi_{S}(x)$ and $\pi(x)$ are defined by
$$
\pi(x)=\#\{p\leq x\mid p\in\mathbb{P}\}\text{ and }\pi_{S}(x)=\#\{p\leq x\mid p\in S\}.
$$
The set of primes (resp. of natural numbers) $S$ (resp. $A$) is said to have Dirichlet density $\delta(S)$ (resp. Dedekind-Dirichlet density $\delta(A)$) if the limit \\
$\delta(S)=\lim_{z\rightarrow 1^{+}}\frac{\sum_{p\in S}\frac{1}{p^{z}}}{\text{log }\left(\frac{1}{z-1}\right)}$ (resp. $\delta(A)=\lim_{z\rightarrow 1^{+}}(z-1)\sum_{n\in A}\frac{1}{n^{z}}$) exists. Recall that if the set $A$ of natural numbers has natural density $d(A)$, then it also has Dedekind-Dirichlet density $\delta (A)$ with $d(A)=\delta(A)$. Further, the set of primes $S$ is said to be regular if there is a holomorphic function $g(z)$ on $Re(z)\geq 1$ such that
$$
\sum_{p\in S}\frac{1}{p^{z}}=\delta(S)\text{ log}\left(\frac{1}{z-1}\right)+g(z).
$$
We need the following technical lemma (see \cite[Lemma 2.1]{ilker3}).

\begin{lem}\thlabel{thmdens}
Let $S_1$ and $S_2$ be two regular sets of primes such that $\delta(S_{1})=\delta(S_{2})$. Then the function $\sum_{p\in S_1}\frac{1}{p^{z}}-\sum_{q\in S_1}\frac{1}{q^{z}}$ is analytic on $Re(z)\geq 1$.
\end{lem}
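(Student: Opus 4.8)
The plan is to obtain the statement directly from the definition of a regular set of primes, so the proof is essentially a one-line computation once the definitions are unpacked. First I would invoke regularity of $S_1$ and $S_2$: by hypothesis there are holomorphic functions $g_1(z)$ and $g_2(z)$ on the half-plane $\operatorname{Re}(z)\geq 1$ with
$$
\sum_{p\in S_1}\frac{1}{p^{z}}=\delta(S_1)\,\mathrm{log}\!\left(\frac{1}{z-1}\right)+g_1(z),
\qquad
\sum_{q\in S_2}\frac{1}{q^{z}}=\delta(S_2)\,\mathrm{log}\!\left(\frac{1}{z-1}\right)+g_2(z).
$$

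Next I would subtract these two identities. Because $\delta(S_1)=\delta(S_2)$, the two copies of $\mathrm{log}\!\left(\frac{1}{z-1}\right)$ — which carry the only singularity, located at $z=1$ — cancel exactly, leaving
$$
\sum_{p\in S_1}\frac{1}{p^{z}}-\sum_{q\in S_2}\frac{1}{q^{z}}
=\bigl(\delta(S_1)-\delta(S_2)\bigr)\,\mathrm{log}\!\left(\frac{1}{z-1}\right)+g_1(z)-g_2(z)
=g_1(z)-g_2(z).
$$
Since a difference of functions holomorphic on $\operatorname{Re}(z)\geq 1$ is again holomorphic there, the left-hand side is analytic on $\operatorname{Re}(z)\geq 1$, as asserted.

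There is essentially no obstacle: the content lies entirely in the definition of regularity, which has been arranged precisely so that the logarithmic singularity is isolated in the term $\delta(S)\,\mathrm{log}\frac{1}{z-1}$. The only point worth emphasizing is that neither of the two Dirichlet series extends analytically past $z=1$ on its own (each has a genuine logarithmic singularity there whenever $\delta(S)\neq 0$); it is exactly the equality of the two densities that forces the difference to be analytic across $z=1$, and this is the property that the later equidistribution arguments will exploit.
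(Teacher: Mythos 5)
Your proof is correct and is exactly the intended argument: the paper itself does not reprove this lemma but cites it (Lemma 2.1 of the Inam--Wiese reference), where the proof is the same definitional cancellation of the $\mathrm{log}\left(\frac{1}{z-1}\right)$ terms that you give. You also correctly read the statement's second sum as running over $S_2$ (the displayed $S_1$ there is a typo), so nothing is missing.
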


The following proposition said that the set of primes $S$ is regular if it has a natural density that satisfies certain error term (see \cite[Proposition 2.2]{ilker3}). 

\begin{prop}\thlabel{prop1}
Let $S\subseteq \mathbb{P}$ be a set of primes that have natural density $d(S)$. Define $E(x)=\frac{\pi_{S}(x)}{\pi(x)}-d(S)$ to be the error function. Suppose that there are $\alpha>0$, $C>0$, and $M>0$ such that for all $x>M$ we have $\mid E(x)\mid \leq C x^{-\alpha}$. Then $S$ is a regular set of primes.  
\end{prop}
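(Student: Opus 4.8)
\emph{Proof proposal.} The plan is to reduce the statement to the classical fact that the full set of primes is regular: the prime zeta function $P(z)=\sum_{p}p^{-z}$ satisfies, on a neighbourhood of the closed half-plane $\mathrm{Re}(z)\geq 1$, a representation
\begin{equation*}
P(z)=\log\!\Big(\frac{1}{z-1}\Big)+h(z),\qquad h \text{ holomorphic},
\end{equation*}
which follows from $P(z)=\log\zeta(z)+\sum_{n\geq 2}\tfrac{\mu(n)}{n}\log\zeta(nz)$ together with the analytic continuation of $\zeta$, its simple pole at $z=1$, and its non-vanishing on the line $\mathrm{Re}(z)=1$. Writing $\pi_{S}(x)=d(S)\,\pi(x)+R(x)$ with $R(x)=E(x)\pi(x)$, the hypothesis gives $|R(x)|\leq C\,x^{-\alpha}\pi(x)\leq C\,x^{1-\alpha}$ for $x>M$, while $R$ is bounded on $[1,M]$ (and vanishes on $[1,2)$).

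Next I would apply Abel summation. For $\mathrm{Re}(z)>1$,
\begin{equation*}
\sum_{\substack{p\in S\\ p\leq X}}\frac{1}{p^{z}}=\frac{\pi_{S}(X)}{X^{z}}+z\int_{1}^{X}\frac{\pi_{S}(x)}{x^{z+1}}\,dx,
\end{equation*}
and since $\pi_{S}(x)\leq\pi(x)\leq x$ the boundary term tends to $0$ and the integral converges as $X\to\infty$; the identical computation with $\pi$ in place of $\pi_{S}$ produces $P(z)$. Subtracting, for $\mathrm{Re}(z)>1$,
\begin{equation*}
\sum_{p\in S}\frac{1}{p^{z}}=d(S)\,P(z)+G(z),\qquad G(z):=z\int_{1}^{\infty}\frac{R(x)}{x^{z+1}}\,dx .
\end{equation*}

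Finally I would show that $G$ extends holomorphically past the line $\mathrm{Re}(z)=1$. Split $G=G_{1}+G_{2}$ with $G_{1}(z)=z\int_{1}^{M}R(x)x^{-z-1}\,dx$, which is entire (an integral of an entire integrand over a compact interval), and $G_{2}(z)=z\int_{M}^{\infty}R(x)x^{-z-1}\,dx$; by the bound on $R$ one has $\int_{M}^{\infty}|R(x)|x^{-\sigma-1}\,dx\ll\int_{M}^{\infty}x^{-\sigma-\alpha}\,dx<\infty$ for $\sigma=\mathrm{Re}(z)>1-\alpha$, locally uniformly, so $G_{2}$ is holomorphic on $\{\mathrm{Re}(z)>1-\alpha\}$, which, since $\alpha>0$, is an open neighbourhood of $\mathrm{Re}(z)\geq 1$. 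Combining with the representation of $P$ gives, for $\mathrm{Re}(z)>1$,
\begin{equation*}
\sum_{p\in S}\frac{1}{p^{z}}=d(S)\log\!\Big(\frac{1}{z-1}\Big)+g(z),\qquad g(z):=d(S)\,h(z)+G(z),
\end{equation*}
with $g$ holomorphic on a neighbourhood of $\mathrm{Re}(z)\geq 1$; in particular $\delta(S)$ exists and equals $d(S)$, so $S$ is regular. The only non-elementary ingredient is the regularity of the set of all primes itself, i.e.\ the zero-free line $\mathrm{Re}(z)=1$ of $\zeta$ (the analytic heart of the prime number theorem); granting that, the argument is routine partial summation together with the absolute convergence supplied by $|E(x)|\leq Cx^{-\alpha}$, the only care being to carry out the identities first in $\mathrm{Re}(z)>1$ and only afterwards push the error piece to the left of the line.
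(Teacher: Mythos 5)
Your argument is correct and complete: Abel summation to get $\sum_{p\in S}p^{-z}=d(S)P(z)+z\int_1^\infty R(x)x^{-z-1}\,dx$ for $\mathrm{Re}(z)>1$, the bound $|R(x)|\leq Cx^{1-\alpha}$ to push the error integral holomorphically to $\mathrm{Re}(z)>1-\alpha$, and the regularity of the full set of primes (non-vanishing of $\zeta$ on the $1$-line) to extract $d(S)\log\frac{1}{z-1}$. Note that the paper does not prove this proposition but quotes it from Inam--Wiese \cite[Proposition 2.2]{ilker3}, whose proof proceeds by essentially the same partial-summation reduction, so your proposal matches the intended argument.
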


\section{The Chebotarev-Sato-Tate equidistribution}

We recall now some properties of the Shimura lift (see \cite{shimura73}). The Fourier coefficients of $f$ and $F_t$ are related by the following formula

\begin{equation}
A_t(n)=\sum_{d|n}\chi_{t,N}(d)d^{k-1}a\left(\frac{n^2}{d^2}t\right),\label{eq:001}
\end{equation}
 
where $\chi_{t,N}$ denotes the character $\chi_{t,N}(d):=\chi(d)\left(\frac{(-1)^{k}N^{2}t}{d}\right)$. Since $f$ is the Hecke eigenform for the Hecke operator $T_{p^{2}}$, $F_t$ is an eigenform for the Hecke operator $T_p$, for all primes $p\nmid N$. Further, we have $F_{t}=a(t)F$, where $F$ is a normalised Hecke eigenform independant of $t$. 

Applying the Ramanujan-Petersson bound to the Fourier coefficients of $F_t$, then $\mid \frac{A_{t}(p)}{a(t)}\mid \leq 2\,p^{\frac{k-1}{2}}$. Since $F_{t}\in S_{2k}(N/2,\chi^{2})$, then $A_{t}(p)=\chi^{2}(p)\overline{A_{t}(p)}$. Therefore $\frac{A_{t}(p)}{\chi(p)}\in\R$ and define
$$
B_{t}(p):=\frac{A_{t}(p)}{2 a(t)p^{\frac{k-1}{2}}\chi(p)}\in[-1,1].
$$
Notice that $a(t)\in\R$, since $a(t)=\frac{A_{t}(1)}{\chi(1)}$.

Recall that the Sato-Tate measure $\mu$ is the measure on $[-1,1]$ given by $\frac{2}{\pi}\sqrt{1-t^{2}}\,dt$. We state the important Sato-Tate equidistribution theorem for $\Gamma_{0}(N)$ (see Theorem B of \cite{taylor}). 

\begin{thm}\thlabel{thm3}(Barnet-Lamb, Geraghty, Harris, Taylor).
Let $k\geq 1$ and let $F_{t}=\sum_{n\geq 1}A(n)q^{n}\in S_{2k}(N/2,\chi^{2})$ be a cuspidal Hecke eigenform of weight $2k$ for $\Gamma_{0}(N)$. Suppose that $F_{t}$ is without multiplication. Denote by $Im\chi$ the image of $\chi$ and let $\xi\in Im\chi$. Then, when $p$ runs through the primes $p\nmid N$ such that $\chi(p)=\xi$, the numbers $B(p)=\frac{A_{t}(p)}{2 a(t)p^{\frac{k-1}{2}}\chi(p)}\in[-1,1]$ are $\mu-$equidistributed in $[-1,1]$. 
\end{thm}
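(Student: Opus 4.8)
The plan is to deduce the statement from the Sato--Tate theorem of Barnet-Lamb, Geraghty, Harris and Taylor (Theorem B of \cite{taylor}) by refining it with Dirichlet twists, that is, by promoting it to a Chebotarev--Sato--Tate statement. Write $F=F_t/a(t)$ for the normalised weight $2k$ Hecke eigenform attached to $F_t$, and for $p\nmid N$ put $B(p)=\cos\theta_p$ with $\theta_p\in[0,\pi]$, so that for $m\geq0$
$$
U_m\bigl(B(p)\bigr)=\sum_{j=0}^{m}e^{i(m-2j)\theta_p}
$$
is the $p$-th coefficient of the $m$-th symmetric power $L$-function of $F$ in its analytic normalisation, $U_m$ being the $m$-th Chebyshev polynomial of the second kind. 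The $U_m$ form an orthonormal basis of $L^2\bigl([-1,1],\mu\bigr)$ with $\int U_0\,d\mu=1$ and $\int U_m\,d\mu=0$ for $m\geq1$, so by the Weyl criterion it suffices to prove that $\{p:p\nmid N,\ \chi(p)=\xi\}$ has positive natural density and that $\sum_{p\leq x,\ \chi(p)=\xi}U_m(B(p))=o(\pi(x))$ for every $m\geq1$.

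First I would prove the twisted but unconstrained version: for every $m\geq1$ and every Dirichlet character $\psi$ modulo $N$, $\sum_{p\leq x}\psi(p)\,U_m(B(p))=o(\pi(x))$. Since $F$ has no complex multiplication and, being of weight $2k\geq2$, is not of exceptional type, $\mathrm{Sym}^m F$ is cuspidal for every $m\geq1$; by \cite{taylor} the symmetric powers of $F$ are potentially automorphic, so (via Brauer's theorem) each $L(\mathrm{Sym}^m F\otimes\psi,s)$ continues to a function holomorphic and non-vanishing on $Re(s)\geq1$, with a simple pole at $s=1$ exactly when $m=0$ and $\psi$ is principal. Because $\sum_{p}\psi(p)\,U_m(B(p))\,p^{-s}$ differs from $\log L(\mathrm{Sym}^m F\otimes\psi,s)$ by a series holomorphic for $Re(s)>1/2$, the standard Tauberian argument deducing equidistribution from such analytic information (as in the proof of Theorem B of \cite{taylor}) gives the displayed estimate; taking $m=0$ recovers the prime number theorem for arithmetic progressions, $\sum_{p\leq x}\psi(p)=o(\pi(x))$ for non-principal $\psi$.

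Next I would pass from twists to residue classes. The condition $\chi(p)=\xi$ places $p$ in a union $R_\xi\subseteq(\Z/N\Z)^\times$ of residue classes, and by orthogonality of Dirichlet characters the indicator of $R_\xi$ equals a fixed finite combination $\sum_\psi c_\psi\,\psi$ with $c_{\psi_0}=\#R_\xi/\#(\Z/N\Z)^\times>0$ for the principal character $\psi_0$. Inserting this into the two estimates above gives $\#\{p\leq x:p\nmid N,\ \chi(p)=\xi\}=c_{\psi_0}\,\pi(x)+o(\pi(x))$ and $\sum_{p\leq x,\ \chi(p)=\xi}U_m(B(p))=o(\pi(x))$ for all $m\geq1$; dividing the second by the first and invoking the Weyl criterion yields that the numbers $B(p)$ with $\chi(p)=\xi$ are $\mu$-equidistributed, which is the assertion.

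The hard part is the automorphy input of the middle step: it rests on the potential automorphy of every symmetric power of $F$ and of its Dirichlet twists, which is exactly the content of \cite{taylor}; once this is granted, the remaining points---that an abelian twist of a non-CM form is still non-CM, the Tauberian passage, and the summation of a finite character combination---are routine. Equivalently, the whole argument may be black-boxed as the joint equidistribution of the pair $\bigl(B(p),\chi(p)\bigr)$ with respect to $\mu$ times the pushforward to $Im\,\chi$ of the uniform probability measure on $(\Z/N\Z)^\times$, of which the stated theorem is simply the restriction to the fibre over $\xi$.
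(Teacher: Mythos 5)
The paper itself offers no proof of this statement: it is imported wholesale as Theorem B of \cite{taylor}, and everything downstream treats it as a black box. Your sketch is, in outline, a reconstruction of the argument behind that citation: Weyl's criterion with the Chebyshev polynomials $U_m$, control of the twisted prime sums $\sum_{p\le x}\psi(p)U_m(B(p))$ through potential automorphy of the symmetric powers (Brauer induction giving meromorphic continuation and non-vanishing of the twisted symmetric power $L$-functions on $\mathrm{Re}(s)\ge 1$), and orthogonality of Dirichlet characters to isolate the primes with $\chi(p)=\xi$, whose set has positive density by Dirichlet. So the proposal is correct in approach, and it "buys" an explicit proof where the paper only cites. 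Two points to tidy: (i) the nebentypus bookkeeping — since $F$ has nebentypus $\chi^{2}$, the $p$-th coefficient of the analytically normalised $m$-th symmetric power $L$-function is $\chi(p)^{m}U_m(B(p))$, not $U_m(B(p))$, so the relevant object is $L(\mathrm{Sym}^m F\otimes\psi\overline{\chi}^{m},s)$ rather than $L(\mathrm{Sym}^m F\otimes\psi,s)$; this is harmless because you quantify over all Dirichlet characters $\psi$ modulo $N$, but as written the identification of your Dirichlet series with $\log L(\mathrm{Sym}^m F\otimes\psi,s)$ is off by that twist; (ii) you do not need, and \cite{taylor} does not prove, cuspidality of every $\mathrm{Sym}^m F$ — potential automorphy plus Brauer is exactly what the reference supplies, and it is all your Tauberian step uses, so the clause about cuspidality should be dropped or flagged as a stronger, later result.
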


Inam et al. (see \cite{ilker3},\cite{ilker2},\cite{ilker1}) obtained the equidistribution of the coefficients $a(t n^{2})$ by using \thref{thm3}. In order to prove the geometric equidistribution on the plan as it was explained in the introduction, we need the following hybrid Chebotarev-Sato-Tate equidistribution proved for elliptic curves in \cite{murty} for the first time, and it has been generalized recently by Wong (see \cite{peng}) particularly to non-CM Hecke eigenforms. 

\begin{prop}(Wong)\thlabel{cor2}
Let $q$ be a natural number and $d$ an integer with $(d,q)=1$. Let $[a,b]\subset[-1,1]$ and put $S_{[a,b]}:=\{p\text{ prime}\mid p\equiv d\pmod q, B_{t}(p)\in [a,b]\}$. The set $S_{[a,b]}$ has natural density equal to $\frac{2}{\pi\varphi (q)}\int_{a}^{b}\sqrt{1-t^{2}}\,dt$.  
\end{prop}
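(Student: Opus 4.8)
The plan is to deduce this from the analytic properties of the symmetric power $L$-functions of $F$, twisted by Dirichlet characters modulo $q$; it is an instance of a \emph{joint} (Chebotarev $\times$ Sato--Tate) equidistribution statement. Write $B_{t}(p)=\cos\theta_{p}$ with $\theta_{p}\in[0,\pi]$; pushing the Sato--Tate measure forward through $t=\cos\theta$ turns $\tfrac{2}{\pi}\sqrt{1-t^{2}}\,dt$ into $\tfrac{2}{\pi}\sin^{2}\theta\,d\theta$ on $[0,\pi]$. So it is enough to show that the points $\bigl(p\bmod q,\ \theta_{p}\bigr)$, as $p$ ranges over the primes $p\nmid qN$, are equidistributed in $(\Z/q\Z)^{\times}\times[0,\pi]$ with respect to the product of the uniform probability measure and $\tfrac{2}{\pi}\sin^{2}\theta\,d\theta$; specialising to the box $\{d\bmod q\}\times[\arccos b,\arccos a]$ (a continuity set) then gives $d(S_{[a,b]})=\tfrac{1}{\varphi(q)}\cdot\tfrac{2}{\pi}\int_{a}^{b}\sqrt{1-t^{2}}\,dt$, as claimed.

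First I would invoke the Weyl criterion together with the Stone--Weierstrass theorem: a uniformly dense family of test functions on $(\Z/q\Z)^{\times}\times[0,\pi]$ is given by the products $\psi(\,\cdot\,)\,U_{m}(\cos\theta)$, where $\psi$ runs over the Dirichlet characters mod $q$, $m\ge0$, and $U_{m}$ is the $m$-th Chebyshev polynomial of the second kind, so $U_{m}(\cos\theta)=\sin((m+1)\theta)/\sin\theta$. The test function attached to the principal character $\psi_{0}$ and $m=0$ gives the Weyl sum $\pi(x)+O(1)$, and the target integral vanishes for every other pair by orthogonality. Hence the whole statement reduces to the estimate
\begin{equation}\label{eq:weylred}
\sum_{p\le x}\psi(p)\,U_{m}(\cos\theta_{p})=o\bigl(\pi(x)\bigr)\qquad\text{for all }(\psi,m)\neq(\psi_{0},0).
\end{equation}

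To prove \eqref{eq:weylred} I would attach to each such pair the twisted symmetric power $L$-function $L\bigl(s,\mathrm{Sym}^{m}F\otimes\psi'\bigr)$, where $\psi':=\psi\,\overline{\chi}^{\,m}$ is a Dirichlet character to the modulus $\mathrm{lcm}(q,N)$; the extra twist by $\overline{\chi}^{\,m}$ is what absorbs the nebentypus, since the analytically normalised Satake parameters of $F$ at a good prime are $\chi(p)e^{\pm i\theta_{p}}$. Taking the logarithm of the Euler product and retaining only the degree-one terms in $p^{-s}$ gives
\[
\log L\bigl(s,\mathrm{Sym}^{m}F\otimes\psi'\bigr)=\sum_{p\nmid qN}\frac{\psi(p)\,U_{m}(\cos\theta_{p})}{p^{s}}+H(s),
\]
with $H$ holomorphic on $\mathrm{Re}(s)>\tfrac12$ (it gathers the higher prime powers and the finitely many bad Euler factors). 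The point is then that for every $(\psi,m)\neq(\psi_{0},0)$ the function $L\bigl(s,\mathrm{Sym}^{m}F\otimes\psi'\bigr)$ is holomorphic and non-vanishing on the closed half-plane $\mathrm{Re}(s)\ge1$: when $m=0$ this is the classical non-vanishing of Dirichlet $L$-functions, and when $m\ge1$ it follows — for the non-CM form $F$ — from the potential automorphy of the symmetric powers (Barnet-Lamb--Geraghty--Harris--Taylor and later refinements) combined, via Brauer induction, with the Jacquet--Shalika and Shahidi non-vanishing theorems at the edge of the critical strip. This last point is precisely the content of Wong's extension of the Murty--Murty hybrid equidistribution theorem.

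Granting it, the Dirichlet series $\sum_{p}\psi(p)U_{m}(\cos\theta_{p})p^{-s}$ extends continuously (indeed holomorphically) to $\mathrm{Re}(s)\ge1$. Splitting into real and imaginary parts and adding a suitable multiple of $\sum_{p}p^{-s}=\log\tfrac{1}{s-1}+(\text{holomorphic})$ to make the coefficients non-negative, one concludes \eqref{eq:weylred} by a Wiener--Ikehara/Delange Tauberian theorem (legitimate since $|U_{m}(\cos\theta_{p})|\le m+1$); feeding this back into the Weyl criterion finishes the proof. The Weyl reduction and the Tauberian step are routine; the real obstacle — the reason the proposition is attributed to Wong and is not elementary — is the holomorphy and non-vanishing of $L(s,\mathrm{Sym}^{m}F\otimes\psi')$ on $\mathrm{Re}(s)=1$ for \emph{all} $m\ge1$ and all the relevant twists, which rests on the full strength of the automorphy machinery for symmetric powers.
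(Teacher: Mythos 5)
The paper does not actually prove this proposition: it is imported verbatim from Wong \cite{peng} (see the citation of \cite[Proposition 2.2]{peng} just after the statement), which in turn generalizes the Murty--Murty hybrid Chebotarev--Sato--Tate theorem \cite{murty} and ultimately rests on the potential automorphy results of Barnet-Lamb--Geraghty--Harris--Taylor \cite{taylor}. So there is no internal argument to compare yours against; what you have written is essentially a reconstruction of the external proof, and its outline is the correct one: Weyl/Stone--Weierstrass reduction to the sums $\sum_{p\le x}\psi(p)U_{m}(\cos\theta_{p})$, identification of these as the degree-one part of $\log L(s,\mathrm{Sym}^{m}F\otimes\psi\overline{\chi}^{m})$, invertibility of those $L$-functions on $\mathrm{Re}(s)\ge 1$ via potential automorphy plus Brauer induction and Jacquet--Shalika/Shahidi, and a Tauberian step to convert holomorphy of the prime Dirichlet series into $o(\pi(x))$ bounds, hence natural (not merely Dirichlet) density. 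Two small points of care: first, since only \emph{potential} automorphy is known for general $m$, the correct statement to invoke is that $L(s,\mathrm{Sym}^{m}F\otimes\psi')$ is meromorphic and has neither zero nor pole on $\mathrm{Re}(s)\ge 1$ (irreducibility of $\mathrm{Sym}^{m}$ of a non-CM form twisted by a finite-order character rules out a pole at $s=1$), which is all your argument needs; second, in the Tauberian step the singularity produced by adding $C\sum_{p}p^{-s}$ is logarithmic rather than a simple pole, so the plain Wiener--Ikehara theorem does not apply directly --- one needs Delange's version (or the standard Serre-type lemma that a bounded-coefficient prime Dirichlet series holomorphic on $\mathrm{Re}(s)\ge 1$ has partial sums $o(x/\log x)$), which you do gesture at. With those caveats your sketch matches the proof in the cited source rather than anything in this paper.
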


Using Dirichlet's theorem on arithmetic progressions, this proposition could be rewritten as follows.

\begin{prop}\thlabel{cor1}
Let $q$ be a natural number, $\varepsilon\pmod q$ a Dirichlet character and $\xi$ a root of unity such that $\xi\in\,Im\,\varepsilon $. Let $[a,b]\subset[-1,1]$ and put $S_{[a,b]}:=\{p\text{ prime}\mid \varepsilon(p)=\xi , B_{t}(p)\in [a,b]\}$. The set $S_{[a,b]}$ has natural density equal to $\frac{1}{\# Im\,\varepsilon}\frac{2}{\pi}\int_{a}^{b}\sqrt{1-t^{2}}\,dt$, where $\# Im\,\varepsilon$ is the cardinality of the image of $\varepsilon $.  
\end{prop}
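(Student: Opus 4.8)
The plan is to deduce \thref{cor1} from \thref{cor2} by splitting the single condition $\varepsilon(p)=\xi$ into finitely many congruence conditions modulo $q$. First I would observe that, since $\xi$ is a root of unity and hence $\xi\neq 0$, the requirement $\varepsilon(p)=\xi$ already forces $p\nmid q$, so that $\varepsilon(p)=\xi$ holds exactly when the residue class of $p$ in $(\Z/q\Z)^{\times}$ lies in the fibre $\varepsilon^{-1}(\xi)\subseteq(\Z/q\Z)^{\times}$; this fibre is nonempty precisely because $\xi\in Im\,\varepsilon$.

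Next I would use the elementary group-theoretic fact that $\varepsilon^{-1}(\xi)$ is a coset of $\ker\varepsilon$ in $(\Z/q\Z)^{\times}$, and therefore has cardinality $\#\ker\varepsilon=\varphi(q)/\#Im\,\varepsilon$. Writing $\varepsilon^{-1}(\xi)=\{d_{1},\dots,d_{m}\}$ with $m=\varphi(q)/\#Im\,\varepsilon$ and $(d_{i},q)=1$ for each $i$, one obtains the decomposition
\[
S_{[a,b]}=\bigsqcup_{i=1}^{m}\bigl\{p\text{ prime}\mid p\equiv d_{i}\pmod q,\ B_{t}(p)\in[a,b]\bigr\},
\]
the union being disjoint because the $d_{i}$ are pairwise incongruent modulo $q$. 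By \thref{cor2}, each of the $m$ sets on the right-hand side has natural density $\frac{2}{\pi\varphi(q)}\int_{a}^{b}\sqrt{1-t^{2}}\,dt$.

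Finally, since natural density is finitely additive on disjoint sets --- the associated counting functions simply add, and one divides by $\pi(x)$ before letting $x\to\infty$ --- summing the $m$ contributions gives
\[
d\bigl(S_{[a,b]}\bigr)=m\cdot\frac{2}{\pi\varphi(q)}\int_{a}^{b}\sqrt{1-t^{2}}\,dt=\frac{1}{\#Im\,\varepsilon}\cdot\frac{2}{\pi}\int_{a}^{b}\sqrt{1-t^{2}}\,dt,
\]
which is the desired formula. Here Dirichlet's theorem on primes in arithmetic progressions is what makes the splitting meaningful --- it guarantees that each class $d_{i}$ carries its expected share of primes --- but it is already built into \thref{cor2}, so it enters only implicitly. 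I do not expect a genuine obstacle in this argument: the only points that need a line of care are the coset count $\#\varepsilon^{-1}(\xi)=\varphi(q)/\#Im\,\varepsilon$ and the legitimacy of interchanging the finite sum with the density limit.
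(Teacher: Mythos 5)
Your proposal is correct and follows exactly the route the paper intends: the paper derives \thref{cor1} from \thref{cor2} by the same splitting of the fibre $\varepsilon^{-1}(\xi)$ into its $\varphi(q)/\#\,Im\,\varepsilon$ residue classes modulo $q$ and adding the densities from \thref{cor2}. Your coset count and the finite additivity of natural density are precisely the (routine) details the paper leaves implicit.
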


We will use frequently throughout the paper the following lemma (see \cite{mezroui}).

\begin{lem}
Under the hypothesis fixed in the introduction, let $n$ be an integer such that $(n,N)=1$. Then $\frac{a(t n^{2})}{\chi(n)}\in\R$.
\end{lem}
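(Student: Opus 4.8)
The plan is to invert the Shimura identity \eqref{eq:001} by Dirichlet convolution and thereby reduce the claim to the reality of $A_{t}(m)/\chi(m)$ for $m$ coprime to $N$. Put $b(m):=a(tm^{2})$ and $g(d):=\chi_{t,N}(d)\,d^{k-1}$, so that \eqref{eq:001} reads $A_{t}(n)=\sum_{d\mid n}g(d)\,b(n/d)$ (using $n^{2}/d^{2}=(n/d)^{2}$), i.e.\ $A_{t}=g*b$ as a Dirichlet convolution. The Kronecker symbol is completely multiplicative in the denominator, and $\chi$ and $d\mapsto d^{k-1}$ are completely multiplicative, so $g$ is completely multiplicative with $g(1)=1$; hence its convolution inverse is $g^{-1}(d)=\mu(d)\,g(d)$, and we obtain
\begin{equation*}
a(tn^{2})=\sum_{d\mid n}\mu(d)\,\chi_{t,N}(d)\,d^{k-1}\,A_{t}(n/d).
\end{equation*}

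Fix now $n$ with $(n,N)=1$. Every divisor $d\mid n$ then satisfies $(d,N)=(n/d,N)=1$, so $\chi(n)=\chi(d)\,\chi(n/d)$ with all factors nonzero, and $\chi_{t,N}(d)/\chi(d)=\left(\frac{(-1)^{k}N^{2}t}{d}\right)\in\{-1,0,1\}$. Dividing the previous identity by $\chi(n)$ yields
\begin{equation*}
\frac{a(tn^{2})}{\chi(n)}=\sum_{d\mid n}\mu(d)\left(\frac{(-1)^{k}N^{2}t}{d}\right)d^{k-1}\,\frac{A_{t}(n/d)}{\chi(n/d)}.
\end{equation*}
Since $\mu(d)$, the Kronecker symbol and $d^{k-1}$ are real, it is enough to show that $A_{t}(m)/\chi(m)\in\R$ for every $m$ with $(m,N)=1$.

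To this end, write $F_{t}=a(t)\,F$ with $F=\sum A(n)q^{n}$ a normalised Hecke eigenform of weight $2k$ and nebentypus $\chi^{2}$, and recall $a(t)=A_{t}(1)/\chi(1)\in\R\setminus\{0\}$; thus $A_{t}(m)/\chi(m)=a(t)\cdot A(m)/\chi(m)$, and by multiplicativity of $m\mapsto A(m)$ the problem reduces to showing $A(p^{r})/\chi(p)^{r}\in\R$ for every prime $p\nmid N$ and every $r\geq 0$. We argue by induction on $r$. The case $r=0$ is trivial, and for $r=1$ the excerpt already gives $A_{t}(p)/\chi(p)\in\R$ (from $A_{t}(p)=\chi^{2}(p)\overline{A_{t}(p)}$ and $|\chi(p)|=1$), whence $A(p)/\chi(p)=A_{t}(p)/\bigl(a(t)\chi(p)\bigr)\in\R$. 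For the step, the Hecke recursion $A(p^{r+1})=A(p)A(p^{r})-\chi^{2}(p)\,p^{2k-1}A(p^{r-1})$ together with $\chi^{2}(p)=\chi(p)^{2}$ gives, after dividing by $\chi(p)^{r+1}$,
\begin{equation*}
\frac{A(p^{r+1})}{\chi(p)^{r+1}}=\frac{A(p)}{\chi(p)}\cdot\frac{A(p^{r})}{\chi(p)^{r}}-p^{2k-1}\,\frac{A(p^{r-1})}{\chi(p)^{r-1}},
\end{equation*}
which is real by the induction hypothesis. Chaining the reductions then proves the lemma.

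The computations involved are routine; the step that deserves care is the very first one, namely recognising that \eqref{eq:001} is literally a Dirichlet convolution $A_{t}=g*b$ in the variable $n$ once one substitutes $n^{2}/d^{2}=(n/d)^{2}$, and that $g=\chi_{t,N}(\cdot)\,(\cdot)^{k-1}$ is completely multiplicative so that its convolution inverse is $\mu\cdot g$. One should also keep track of the constraint $(n,N)=1$ to be sure it descends to all divisors $d$ and cofactors $n/d$, so that $\chi$ is nonvanishing on every term that occurs.
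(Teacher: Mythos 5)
The paper itself gives no proof of this lemma; it is imported wholesale from \cite{mezroui}, so your argument has to stand on its own. Its skeleton is in good shape: recognising \eqref{eq:001} as the Dirichlet convolution $A_t=g*b$ with $g$ completely multiplicative, hence $g^{-1}=\mu\cdot g$, correctly yields $a(tn^{2})=\sum_{d\mid n}\mu(d)\chi_{t,N}(d)d^{k-1}A_t(n/d)$; the coprimality $(n,N)=1$ does descend to all $d$ and $n/d$; and for the \emph{normalised} eigenform $F$ the reality of $A(p^{r})/\chi(p)^{r}$ follows from $\overline{A(p)}=\overline{\chi^{2}(p)}\,A(p)$ and the Hecke recursion exactly as you write.

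The genuine gap is the step ``recall $a(t)=A_{t}(1)/\chi(1)\in\R\setminus\{0\}$''. That identity is vacuous: $A_{t}(1)=a(t)$ and $\chi(1)=1$, so it says $a(t)=a(t)$ and carries no reality information. But the reality of $a(t)$ is precisely the $n=1$ instance of the lemma you are proving, and it cannot follow from the hypotheses of the introduction alone: replacing $f$ by $if$ preserves every stated hypothesis (nonzero cuspidal Hecke eigenform, $a(t)\neq 0$) while multiplying every $a(tn^{2})/\chi(n)$ by $i$. What your computation actually establishes is the scaling-invariant statement that $a(tn^{2})/\chi(n)$ is a real multiple of $a(t)$, equivalently $\overline{a(t)}\,a(tn^{2})/\chi(n)\in\R$; the additional input that $a(t)$ itself is real (a normalisation of $f$) is exactly what the citation to \cite{mezroui} is supplying, and quoting the paper's own parenthetical ``Notice that $a(t)\in\R$'' is circular inside a proof of this lemma. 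The same issue infects your base case: the relation $A_{t}(p)=\chi^{2}(p)\overline{A_{t}(p)}$ is the adjointness relation for the normalised form, i.e.\ $\overline{A(p)}=\overline{\chi^{2}(p)}A(p)$, and it transfers to $A_{t}(p)=a(t)A(p)$ only once $a(t)$ is known to be real --- again the point at issue. To repair the write-up, either prove the invariant form $\overline{a(t)}\,a(tn^{2})/\chi(n)\in\R$ (your convolution and recursion argument does this verbatim once you work with $F$ rather than $F_t$), or state explicitly that $f$ is normalised so that $a(t)\in\R$, citing \cite{mezroui} for why this is legitimate.
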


\section{Preliminaries Results}

We next show that the Chebotarev-Sato-Tate theorem (see \cite[Proposition 2.2]{peng}) gives the equidistribution of the coefficients $a(t p^{2})$ when a primes $p$ run over arithmetic progressions.

\begin{thm}\thlabel{thm4}
We use the assumptions fixed in the introduction and suppose that $F_{t}$ has no CM. Let $q$ be a natural number, $\varepsilon\pmod q$ a Dirichlet character and $\xi$ a root of unity such that $\xi\in\,Im\,\varepsilon $. Define the set of primes
$$
\mathbb{P}_{\varepsilon ,\xi,>}:=\left\{p\in\mathbb{P}\mid \varepsilon(p)=\xi, \frac{a(t p^{2})}{\chi(p)}>0\right\},
$$
and similarly $\mathbb{P}_{\varepsilon,\xi}$, $\mathbb{P}_{\varepsilon,\xi,<}$, $\mathbb{P}_{\varepsilon,\xi ,\geq}$, $\mathbb{P}_{\varepsilon,\xi ,\leq}$, and $\mathbb{P}_{\varepsilon,\xi ,=0}$. Let $d$ be an integer such that $(d,q)=1$. Define also
$$
\mathbb{P}_{d,q,>}:=\left\{p\in\mathbb{P}\mid p\equiv d\text{ mod }q,\frac{a(t p^{2})}{\chi(p)}>0\right\},
$$
and similarly $\mathbb{P}_{d,q}$, $\mathbb{P}_{d,q,<}$, $\mathbb{P}_{d,q,\geq}$, $\mathbb{P}_{d,q,\leq}$, $\mathbb{P}_{d,q,=0}$. 

The sets $\mathbb{P}_{d,q,>}$, $\mathbb{P}_{d,q,<}$, $\mathbb{P}_{d,q,\geq}$, $\mathbb{P}_{d,q,\leq}$ have natural density $\frac{1}{2\varphi(q)}$ and $\mathbb{P}_{d,q,=0}$ has natural density $0$. Further, the sets $\mathbb{P}_{\varepsilon,\xi,>}$, $\mathbb{P}_{\varepsilon,\xi ,<}$, $\mathbb{P}_{\varepsilon,\xi ,\geq}$, $\mathbb{P}_{\varepsilon,\xi ,\leq}$ have natural density $\frac{1}{2\# Im\,\varepsilon}$ and $\mathbb{P}_{\varepsilon,\xi ,=0}$ has natural density $0$, where $\# Im\,\varepsilon$ is the cardinality of the image of $\varepsilon$. 
\end{thm}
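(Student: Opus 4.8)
The plan is to reduce the statement to the Chebotarev--Sato--Tate results \thref{cor2} and \thref{cor1} by means of an explicit identity. Taking $n=p$ in \eqref{eq:001} gives, for all but finitely many primes $p$, the relation $A_{t}(p)=a(tp^{2})+\chi_{t,N}(p)\,p^{k-1}a(t)$; dividing by $\chi(p)$, noting that $\chi_{t,N}(p)/\chi(p)=\bigl(\tfrac{(-1)^{k}N^{2}t}{p}\bigr)=:\psi(p)\in\{\pm1\}$, and inserting $A_{t}(p)/\chi(p)=2a(t)\,p^{k-1}\sqrt{p}\,B_{t}(p)$ (the Ramanujan--Petersson normalization attached to the weight $2k$ of $F_{t}$, so that $B_{t}(p)\in[-1,1]$), one obtains
\begin{equation*}
\frac{a(tp^{2})}{\chi(p)}=a(t)\,p^{k-1}\bigl(2\sqrt{p}\,B_{t}(p)-\psi(p)\bigr).
\end{equation*}
Since $a(t)\in\mathbb{R}\setminus\{0\}$ and $p^{k-1}>0$, the sign of $a(tp^{2})/\chi(p)$ is $\operatorname{sgn}(a(t))\operatorname{sgn}\bigl(2\sqrt{p}\,B_{t}(p)-\psi(p)\bigr)$. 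The decisive observation is that as soon as $|B_{t}(p)|>\tfrac{1}{2\sqrt{p}}$ we have $|2\sqrt{p}\,B_{t}(p)|>1\ge|\psi(p)|$, whence $\operatorname{sgn}\bigl(2\sqrt{p}\,B_{t}(p)-\psi(p)\bigr)=\operatorname{sgn}(B_{t}(p))$. Thus, off the exceptional set $\mathcal{E}:=\{p\ \text{prime}:\ |B_{t}(p)|\le\tfrac{1}{2\sqrt{p}}\}$ one has $\operatorname{sgn}\bigl(a(tp^{2})/\chi(p)\bigr)=\operatorname{sgn}(a(t))\operatorname{sgn}(B_{t}(p))$; and since $a(tp^{2})/\chi(p)=0$ forces $|B_{t}(p)|=\tfrac{1}{2\sqrt{p}}$, both $\mathbb{P}_{\varepsilon,\xi,=0}$ and $\mathbb{P}_{d,q,=0}$ are contained in $\mathcal{E}$.

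I would then show that $\mathcal{E}$ meets every progression $p\equiv d\bmod q$, and every fibre $\varepsilon(p)=\xi$, in a set of natural density $0$. This is the only delicate point, because \thref{cor2} and \thref{cor1} apply to a \emph{fixed} interval, whereas in $\mathcal{E}$ the window $[-\tfrac{1}{2\sqrt{p}},\tfrac{1}{2\sqrt{p}}]$ around $0$ shrinks with $p$. The remedy is an $\varepsilon$--$p_{0}$ argument: given $\delta>0$, pick $p_{0}$ with $\tfrac{1}{2\sqrt{p_{0}}}<\delta$, so that $\mathcal{E}\cap\{p\equiv d\bmod q\}\subseteq\{p\le p_{0}\}\cup\{p>p_{0}:\ p\equiv d\bmod q,\ B_{t}(p)\in[-\delta,\delta]\}$; by \thref{cor2} the upper natural density of the right-hand side is at most $\tfrac{2}{\pi\varphi(q)}\int_{-\delta}^{\delta}\sqrt{1-t^{2}}\,dt\le\tfrac{4\delta}{\pi\varphi(q)}$, and letting $\delta\to0$ gives density $0$. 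The same argument with \thref{cor1} and $\# Im\,\varepsilon$ in place of $\varphi(q)$ handles the fibre $\varepsilon(p)=\xi$. In particular $d(\mathbb{P}_{d,q,=0})=d(\mathbb{P}_{\varepsilon,\xi,=0})=0$, and likewise $d(\{p:\ B_{t}(p)=0\})=0$.

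It remains to compute the positive densities, which is now bookkeeping. Modulo the density-$0$ set $\mathcal{E}$, the set $\mathbb{P}_{\varepsilon,\xi,>}$ equals $\{p:\ \varepsilon(p)=\xi,\ B_{t}(p)>0\}$ when $a(t)>0$ and $\{p:\ \varepsilon(p)=\xi,\ B_{t}(p)<0\}$ when $a(t)<0$, and in both cases \thref{cor1} (together with $d(\{p:\ B_{t}(p)=0\})=0$) gives natural density $\tfrac{1}{\# Im\,\varepsilon}\cdot\tfrac{2}{\pi}\int_{0}^{1}\sqrt{1-t^{2}}\,dt=\tfrac{1}{2\# Im\,\varepsilon}$; the same holds for $\mathbb{P}_{\varepsilon,\xi,<}$. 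Since $\mathbb{P}_{\varepsilon,\xi,\ge}=\mathbb{P}_{\varepsilon,\xi,>}\cup\mathbb{P}_{\varepsilon,\xi,=0}$ and $\mathbb{P}_{\varepsilon,\xi,\le}=\mathbb{P}_{\varepsilon,\xi,<}\cup\mathbb{P}_{\varepsilon,\xi,=0}$ are disjoint unions whose second piece has density $0$, they too have density $\tfrac{1}{2\# Im\,\varepsilon}$. Running the identical argument with \thref{cor2} and $\varphi(q)$ instead of \thref{cor1} and $\# Im\,\varepsilon$ yields density $\tfrac{1}{2\varphi(q)}$ for $\mathbb{P}_{d,q,>}$, $\mathbb{P}_{d,q,<}$, $\mathbb{P}_{d,q,\ge}$, $\mathbb{P}_{d,q,\le}$, and density $0$ for $\mathbb{P}_{d,q,=0}$. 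The main obstacle throughout is exactly the density-$0$ control of $\mathcal{E}$ via this shrinking-window trick; everything else follows from \eqref{eq:001} and the elementary value $\int_{0}^{1}\sqrt{1-t^{2}}\,dt=\pi/4$.
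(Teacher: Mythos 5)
Your proof is correct and follows essentially the same route as the paper: both derive from \eqref{eq:001} that the sign of $a(tp^{2})/\chi(p)$ agrees with that of $B_{t}(p)$ up to a threshold of size $\tfrac{1}{2\sqrt{p}}$, and then apply \thref{cor2} and \thref{cor1} with a limiting argument in the interval endpoint to absorb the shrinking threshold. The only differences are cosmetic packaging — you isolate an exceptional set of density $0$ and compute densities directly on $[0,1]$, while the paper argues via $\liminf$ over $[\epsilon,1]$ and complementarity of the sets for $>$ and $\leq$ — and you track $\operatorname{sgn}(a(t))$ explicitly where the paper normalizes $a(t)=1$.
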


\begin{proof}
Define the sets $\pi_{d,q,>}(x):=\#\{p\leq x\mid p\equiv d\text{ mod }q,\frac{a(t p^{2})}{\chi(p)}>0\}$, and similarly, $\pi_{d,q}(x)$,$\pi_{d,q,<}(x)$, $\pi_{d,q,\geq }(x)$, $\pi_{d,q,\leq }(x)$, and $\pi_{d,q,=0}(x)$. Without loss of generality, we can assume that $F_t$ is normalised and thus $a(t)=1$. Denote the character $\left(\frac{(-1)^{k}N^{2}t}{.}\right)$ by $\chi_{1}(.)=\left(\frac{(-1)^{k}N^{2}t}{.}\right)$. The formula \eqref{eq:001} yields
$$
\frac{a(t p^{2})}{\chi(p)}>0 \Longleftrightarrow B_{t}(p)>\frac{\chi_{1}(p)}{2\sqrt{p}}.
$$
Let $\epsilon >0$. Since for all $p>\frac{1}{4 \epsilon^{2}}$, we have $\frac{\chi_{1}(p)}{2\sqrt{p}}=\frac{1}{2\sqrt{p}}<\epsilon$, then
\begin{multline}
\pi_{d,q,>}(x)+\#\{p\leq x \text{ prime}\mid p\equiv d\text{ mod }q, p\leq \frac{1}{4 \epsilon^{2}}\}\geq\\ \# \{p\leq x \text{ prime}\mid p\equiv d\text{ mod }q, B_{t}(p)>\epsilon\}.
\end{multline}
Applying \thref{cor2} we get 
$$
\lim_{x\rightarrow \infty}\frac{\#\{p\leq x\text{ prime}\mid p\equiv d\text{ mod }q, B_{t}(p)>\epsilon\}}{\pi(x)}=\frac{\mu([\epsilon,1])}{\varphi(q)}
$$ 
and then 
$$
\lim_{x\rightarrow \infty}\frac{\#\{p\leq x\text{ prime}\mid p\equiv d\text{ mod }q, B_{t}(p)>\epsilon\}}{\pi_{d,q}(x)}=\mu([\epsilon,1]).
$$
It follows that $\liminf_{x\rightarrow \infty} \frac{\pi_{d,q,>}(x)}{\pi_{d,q}(x)}\geq \mu([\epsilon,1])$ for all $\epsilon >0$, hence $\liminf_{x\rightarrow \infty} \frac{\pi_{d,q,>}(x)}{\pi_{d,q}(x)}\geq \mu([0,1])=\frac{1}{2}$.   

Similarly, we have 
$$
\liminf_{x\rightarrow \infty} \frac{\pi_{d,q,\leq }(x)}{\pi_{d,q}(x)}\geq \mu([0,1])=\frac{1}{2}.
$$ 
Since $\pi_{d,q,\leq }(x)=\pi_{d,q}(x)-\pi_{d,q,>}(x)$, then $\limsup_{x\rightarrow \infty}\frac{\pi_{d,q,>}(x)}{\pi_{d,q}(x)}=\frac{1}{2}$. Using the same method, we obtain the densities of $\mathbb{P}_{d,q,<}$, $\mathbb{P}_{d,q,\geq }$, and $\mathbb{P}_{d,q,\leq }$. Finally, since $\pi_{d,q,= 0}(x)=\pi_{d,q,\geq } (x)-\pi_{d,q,>}(x)$, then the density of $\mathbb{P}_{d,q,=0}(x)$ is zero. 

The densities of the sets $\mathbb{P}_{\varepsilon,\xi,>}$, $\mathbb{P}_{\varepsilon,\xi,<}$, $\mathbb{P}_{\varepsilon,\xi ,\geq}$, $\mathbb{P}_{\varepsilon,\xi ,\leq}$, and $\mathbb{P}_{\varepsilon,\xi ,=0}$ are obtained similarly by using \thref{cor1}. 
\end{proof}

The following theorem said that the set of primes of \thref{thm4} is regular if the Chebotarev-Sato-Tate theorem satisfies certain error term. The proof is closely similar to that of \cite[Theorem 4.2]{ilker3}. 

\begin{thm}\thlabel{th5}
Assuming the assumptions of \thref{thm4} and suppose there are $C>0$ and $\alpha>0$ such that 
$$
\left|\frac{\#\left\{p\leq x\text{ prime}\mid \varepsilon(p)=\xi, \frac{A_{t}(p)}{2 a(t)p^{\frac{k-1}{2}}\chi(p)}\in [a,b]\right\}}{\pi(x)}-\frac{\mu([a,b])}{\# Im\,\varepsilon}\right| \leq \frac{C}{x^{\alpha}}\cdot
$$
Then, the sets $\mathbb{P}_{\varepsilon,\xi, \geq }$, $\mathbb{P}_{\varepsilon,\xi, \leq }$, $\mathbb{P}_{\varepsilon,\xi, >}$, $\mathbb{P}_{\varepsilon,\xi, <}$ and $\mathbb{P}_{\varepsilon,\xi, =0}$ are regular sets of primes. 
\end{thm}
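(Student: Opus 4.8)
The plan is to reduce everything to the error-term hypothesis via \thref{thm4} and \thref{prop1}, exactly mirroring the structure of \cite[Theorem 4.2]{ilker3}. First I would normalise so that $a(t)=1$ and recall from the proof of \thref{thm4} the key elementary equivalence, writing $\chi_1(\cdot)=\left(\frac{(-1)^kN^2t}{\cdot}\right)$:
$$
\frac{a(tp^2)}{\chi(p)}>0\iff B_t(p)>\frac{\chi_1(p)}{2\sqrt p},\qquad
\frac{a(tp^2)}{\chi(p)}\geq 0\iff B_t(p)\geq\frac{\chi_1(p)}{2\sqrt p}.
$$
Since $\frac{|\chi_1(p)|}{2\sqrt p}=\frac{1}{2\sqrt p}\to 0$, the set $\mathbb{P}_{\varepsilon,\xi,>}$ (resp.\ $\mathbb{P}_{\varepsilon,\xi,\geq}$) differs from $\{p\mid \varepsilon(p)=\xi,\ B_t(p)>0\}$ (resp.\ $\geq 0$) only by primes $p$ lying in the thin band $B_t(p)\in\bigl(-\tfrac{1}{2\sqrt p},\tfrac{1}{2\sqrt p}\bigr]$ (up to sign), and these $p$ must additionally satisfy $p\leq x$; I will show this discrepancy contributes a power-saving error.

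The core step is to estimate, for fixed $\varepsilon,\xi$,
$$
\pi_{\varepsilon,\xi,>}(x):=\#\Bigl\{p\leq x\mid \varepsilon(p)=\xi,\ \tfrac{a(tp^2)}{\chi(p)}>0\Bigr\}
$$
and to produce a holomorphic function $g(z)$ on $\mathrm{Re}(z)\geq 1$ with $\sum_{p\in\mathbb{P}_{\varepsilon,\xi,>}}p^{-z}=\tfrac{1}{2\#\mathrm{Im}\,\varepsilon}\log\frac{1}{z-1}+g(z)$. By \thref{prop1} it suffices to show $\mathbb{P}_{\varepsilon,\xi,>}$ has natural density $\tfrac{1}{2\#\mathrm{Im}\,\varepsilon}$ with a power-saving error term $|E(x)|\le C'x^{-\alpha'}$. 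To get this I split: for a cutoff $T=T(x)$ to be chosen, primes $p\leq T$ are counted trivially (their count is $O(\pi(T))=o(\pi(x))$ and can be absorbed, or better, handled so as to keep a power saving by taking $T$ a small power of $x$), while for $T<p\leq x$ I compare with $B_t(p)>0$. Concretely,
$$
\Bigl|\pi_{\varepsilon,\xi,>}(x)-\#\{p\leq x\mid \varepsilon(p)=\xi,\ B_t(p)>0\}\Bigr|\leq \#\Bigl\{p\leq x\mid B_t(p)\in\bigl(-\tfrac{1}{2\sqrt T},\tfrac{1}{2\sqrt T}\bigr)\Bigr\}+\pi(T),
$$
and the first term is, by the hypothesis applied to the interval $[a,b]=[-\tfrac{1}{2\sqrt T},\tfrac{1}{2\sqrt T}]$, at most $\bigl(\tfrac{\mu([-1/(2\sqrt T),1/(2\sqrt T)])}{\#\mathrm{Im}\,\varepsilon}+\tfrac{C}{x^\alpha}\bigr)\pi(x)\ll \bigl(T^{-1/2}+x^{-\alpha}\bigr)\pi(x)$, since $\mu$ has bounded density so $\mu([-\delta,\delta])\ll\delta$. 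Meanwhile the hypothesis with $[a,b]=[0,1]$ gives $\#\{p\leq x\mid \varepsilon(p)=\xi,\ B_t(p)>0\}=\tfrac{1/2}{\#\mathrm{Im}\,\varepsilon}\pi(x)+O(x^{-\alpha}\pi(x)+\pi(\sqrt x))$ — the $\pi(\sqrt x)$ accounting for the ambiguity between $B_t(p)>0$ and $B_t(p)\geq 0$ on the measure-zero point $0$, which only affects finitely many... actually countably many $p$ but is bounded by those $p$ with $A_t(p)=0$, whose counting function is $O(x/\log^A x)$ for every $A$ by a Rankin–Selberg / Serre argument, or more simply absorbed since it is $o(\pi(x))$; to keep a clean power saving one notes $B_t(p)=0$ forces $p\mid A_t(p)$-type rigidity and in any case the endpoint set is a subset of the thin-band set already estimated. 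Choosing $T=x^{2\alpha}$ balances the two error sources and yields $|E(x)|\ll x^{-\alpha}$ for a possibly smaller $\alpha$. This handles $\mathbb{P}_{\varepsilon,\xi,>}$; the cases $<$, $\geq$, $\leq$ are identical with the roles of $[0,1]$ and $[-1,0]$ interchanged, and $\mathbb{P}_{\varepsilon,\xi,=0}=\mathbb{P}_{\varepsilon,\xi,\geq}\setminus\mathbb{P}_{\varepsilon,\xi,>}$ has density zero with power saving, hence is trivially regular (the $g(z)$ is just $\sum_{p\in\mathbb{P}_{\varepsilon,\xi,=0}}p^{-z}$, convergent and holomorphic on $\mathrm{Re}(z)\ge 1$ by the power-saving bound via partial summation).

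Finally I invoke \thref{prop1} to conclude each of these five sets is regular. The main obstacle I anticipate is the bookkeeping of the error term through the cutoff $T$: one must make sure the band estimate $\mu([-\delta,\delta])\ll\delta$ interacts correctly with the hypothesised error $Cx^{-\alpha}$ and that the choice $T=x^{2\alpha}$ (or whatever optimises) genuinely leaves a positive power saving rather than merely $o(1)$, since \thref{prop1} crucially requires a power-type bound, not just convergence to the density. A secondary technical point is the endpoint issue $B_t(p)=0$: strictly, $\{B_t(p)>0\}$ and $\{B_t(p)\geq 0\}$ differ by $\{B_t(p)=0\}$, and one should note this set is contained in every band $\{|B_t(p)|<\delta\}$, so it is automatically swallowed by the thin-band estimate above and needs no separate treatment — but this must be said explicitly to keep the argument honest. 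Everything else is the same computation as in \cite[Theorem 4.2]{ilker3}, now carried out with the Chebotarev-refined interval-counting of \thref{cor1} in place of the plain Sato–Tate count.
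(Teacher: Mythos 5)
Your proposal is correct and takes essentially the same route as the paper, whose proof of \thref{th5} is exactly the argument of \cite[Theorem 4.2]{ilker3} that you reconstruct: compare $\mathbb{P}_{\varepsilon,\xi,>}$ (and the other sign sets) with $\{p\mid \varepsilon(p)=\xi,\ B_{t}(p)>0\}$ via the band $|B_{t}(p)|\leq \tfrac{1}{2\sqrt{p}}$ coming from \eqref{eq:001}, use the hypothesised power-saving error term (uniform in the interval $[a,b]$) to get natural densities $\tfrac{1}{2\#\,Im\,\varepsilon}$ (resp.\ $0$ for $\mathbb{P}_{\varepsilon,\xi,=0}$) with a power-saving error, and conclude regularity from \thref{prop1}. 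The only caveat is trivial: the cutoff $T=x^{2\alpha}$ requires $\alpha<\tfrac12$, which one may assume without loss of generality (or simply take $T=x^{2/3}$).
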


\begin{rmk}
Let $\xi_{q}$ be a qth root of unity. The previous error term is weaker than the one conjectured by Akiyama and Tanigawa 
(see \cite{akiyama}) and it can be obtained by \cite[Theorem 1.3]{peng} if GRH is assumed and also, if $L(z,Sym^{m}\frac{F_{t}}{a(t)}\otimes \eta)$ is automorphic over $\mathbb{Q}$ for every $m$ and for all irreducible characters $\eta$ of $G(\mathbb{Q}(\xi_{q})/\mathbb{Q})$.
\end{rmk}

To proceed with our proof, we establish the following two lemmas.

\begin{lem}\thlabel{thm6}
Assuming the assumptions fixed in the introduction and suppose that $F_{t}$ has no CM. Let $q$ be a natural number. Suppose that for all $\varepsilon\pmod q$ Dirichlet characters and all roots of unity $\xi$ such that $\xi\in\,Im\,\varepsilon$, there are $C_{\varepsilon,\xi}>0$ and $\alpha_{\varepsilon ,\xi}>0$ such that 
\begin{equation}\label{specc}
\left|\frac{\#\left\{p\leq x\text{ prime}\mid p\nmid N, \varepsilon(p)=\xi, \frac{A_{t}(p)}{2 a(t)p^{\frac{k-1}{2}}\chi(p)}\in [a,b]\right\}}{\pi(x)}-\frac{\mu([a,b])}{\# Im\,\varepsilon}\right| \leq \frac{C_{\varepsilon,\xi}}{x^{\alpha_{\varepsilon,\xi}}}. 
\end{equation}
Suppose further that $a(t)>0$. Define the multiplicative function, $\forall n\in\mathbb{N}$,

\begin{displaymath}
f(n)=
\begin{cases}
1, & \text{if $\frac{a(t n^{2})}{\chi(n)}>0$ and $(n,N)=1$,}\\\\
-1, & \text{if $\frac{a(t n^{2})}{\chi(n)}<0$ and $(n,N)=1$,}\\\\
0, & \text{if $a(t n^{2})=0$ and $(n,N)=1$,}\\\\
0, & \text{if $(n,N)\neq 1$.}\\\\
\end{cases}
\end{displaymath}

Let $d$ be an integer with $(d,q)=1$. Then the Dirichlet series 
$$
F(z)=\sum_{\substack{n\geq 1\\n\equiv d\text{ mod }q}}\frac{f(n)}{n^{z}}
$$
is holomorphic on $Re(z)\geq 1$.
\end{lem}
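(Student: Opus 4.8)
The plan is to detect the congruence $n\equiv d\pmod q$ with Dirichlet characters and then analyse each resulting character twist of the sign series by factoring its Euler product, so that the whole statement collapses onto the sign cancellation already packaged in \thref{thmdens}. Since $(d,q)=1$, every $n\equiv d\pmod q$ is automatically coprime to $q$, so orthogonality of the Dirichlet characters modulo $q$ gives, for $Re(z)>1$ (where all the series below converge absolutely because $|f(n)|\le 1$),
\[
F(z)=\frac{1}{\varphi(q)}\sum_{\varepsilon\bmod q}\overline{\varepsilon(d)}\,F_{\varepsilon}(z),\qquad\text{where}\qquad F_{\varepsilon}(z):=\sum_{n\geq 1}\frac{f(n)\varepsilon(n)}{n^{z}}.
\]
Hence it is enough to prove that each $F_{\varepsilon}(z)$ extends holomorphically to $Re(z)\geq 1$.

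Since $f$ is multiplicative and $\varepsilon$ is completely multiplicative, $f\varepsilon$ is multiplicative with $|f(n)\varepsilon(n)|\le 1$, so for $Re(z)>1$ one has $F_{\varepsilon}(z)=\prod_{p}\bigl(1+\sum_{j\geq 1}f(p^{j})\varepsilon(p^{j})p^{-jz}\bigr)$. I would split off the degree-one part, writing $F_{\varepsilon}(z)=G_{\varepsilon}(z)\,H_{\varepsilon}(z)$ with
\[
G_{\varepsilon}(z):=\prod_{p}\Bigl(1-\frac{f(p)\varepsilon(p)}{p^{z}}\Bigr)^{-1},\qquad H_{\varepsilon}(z):=\prod_{p}\Bigl(1-\frac{f(p)\varepsilon(p)}{p^{z}}\Bigr)\Bigl(1+\sum_{j\geq 1}\frac{f(p^{j})\varepsilon(p^{j})}{p^{jz}}\Bigr).
\]
In each Euler factor of $H_{\varepsilon}$ the coefficients of $p^{-z}$ cancel, so that factor equals $1+\sum_{j\geq 2}b_{p,j}p^{-jz}$ with $|b_{p,j}|\leq 2$; hence the product defining $H_{\varepsilon}$ converges absolutely and is holomorphic on $Re(z)>\tfrac12$, in particular on $Re(z)\geq 1$. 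The problem is thus reduced to showing that $G_{\varepsilon}(z)$ extends holomorphically to $Re(z)\geq 1$.

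For this I would take logarithms: for $Re(z)>1$,
\[
\log G_{\varepsilon}(z)=\sum_{p}\frac{f(p)\varepsilon(p)}{p^{z}}+\sum_{p}\sum_{m\geq 2}\frac{\bigl(f(p)\varepsilon(p)\bigr)^{m}}{m\,p^{mz}},
\]
where the double sum converges absolutely and defines a holomorphic function on $Re(z)>\tfrac12$. In the remaining sum, the primes with $f(p)=0$ (in particular those dividing $N$) and those with $\varepsilon(p)=0$ (that is, those dividing $q$) contribute nothing, while on the rest $f(p)\in\{1,-1\}$ and $\varepsilon(p)$ runs through the roots of unity in $Im\,\varepsilon$; grouping by the value $\xi=\varepsilon(p)$ gives
\[
\sum_{p}\frac{f(p)\varepsilon(p)}{p^{z}}=\sum_{\xi\in Im\,\varepsilon}\xi\Bigl(\sum_{p\in\mathbb{P}_{\varepsilon,\xi,>}}\frac{1}{p^{z}}-\sum_{p\in\mathbb{P}_{\varepsilon,\xi,<}}\frac{1}{p^{z}}\Bigr).
\]
By \thref{thm4} the sets $\mathbb{P}_{\varepsilon,\xi,>}$ and $\mathbb{P}_{\varepsilon,\xi,<}$ have the same natural density $\tfrac{1}{2\# Im\,\varepsilon}$, hence the same Dirichlet density; by \thref{th5}, applied with the hypothesis \eqref{specc}, both are regular sets of primes; and then \thref{thmdens} shows that $\sum_{p\in\mathbb{P}_{\varepsilon,\xi,>}}p^{-z}-\sum_{p\in\mathbb{P}_{\varepsilon,\xi,<}}p^{-z}$ is analytic on $Re(z)\geq 1$. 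Being a finite $\mathbb{C}$-linear combination of such functions, $\sum_{p}f(p)\varepsilon(p)p^{-z}$ is analytic on $Re(z)\geq 1$; therefore $\log G_{\varepsilon}$, hence $G_{\varepsilon}=\exp(\log G_{\varepsilon})$, hence $F_{\varepsilon}=G_{\varepsilon}H_{\varepsilon}$, and finally $F$, all extend holomorphically to $Re(z)\geq 1$.

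The only substantive input is the prime-level sign equidistribution over the progression together with its regularity, namely \thref{thm4} and \thref{th5}, which is exactly what makes \thref{thmdens} available; the rest is routine Euler-product and character bookkeeping, and I expect no real obstacle beyond organising it. The point that needs a little care is that $f$ is only multiplicative, not completely multiplicative, so one must check — as above, from $|f(p^{j})\varepsilon(p^{j})|\le 1$ — that the correction factor $H_{\varepsilon}$ and the $m\geq 2$ tail of $\log G_{\varepsilon}$ remain convergent and holomorphic on $Re(z)>\tfrac12$; one also uses that deleting the finitely many primes dividing $Nq$ does not affect holomorphy on $Re(z)\geq 1$. The hypothesis $a(t)>0$ enters only through \eqref{eq:001}, to fix the sign correspondence between $f(p)$ and $B_{t}(p)$ on which the densities in \thref{thm4} rest.
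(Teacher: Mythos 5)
Your proposal is correct and follows essentially the same route as the paper's proof: detect the progression by character orthogonality, reduce to the twisted series $\sum_n f(n)\varepsilon(n)n^{-z}$, write its logarithm as $\sum_p f(p)\varepsilon(p)p^{-z}$ plus a function holomorphic on $Re(z)>\tfrac12$, and then group primes by $\xi=\varepsilon(p)$ and conclude via \thref{thm4}, \thref{th5} and \thref{thmdens}. The only real difference is that you re-derive the ``log equals prime sum plus holomorphic remainder'' step by an explicit Euler-product factorization, where the paper cites \cite[Lemma 2.1.2]{ilker2} (and the paper additionally verifies multiplicativity of $f$ from $a(t)>0$ via the Shimura relation, which you take as given from the statement).
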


\begin{proof}
We have 

\begin{align*}
\sum_{\substack{n\geq 1\\n\equiv d\text{ mod }q}}\frac{f(n)}{n^{z}}&=\frac{1}{\varphi(q)}\sum_{n=1}^{\infty}\frac{f(n)}{n^{z}}\times \left(\sum_{\varepsilon\text{ mod }q}\varepsilon(n)\overline{\varepsilon (d)}\right)\\
                      &=\frac{1}{\varphi(q)}\sum_{\varepsilon\text{ mod }q}\left(\sum_{n=1}^{\infty}\frac{f(n)\varepsilon (n)}{n^{z}}\right)\times \overline{\varepsilon (d)}.
\end{align*}

Since the first sum is finite, it suffices to show that $G_{\varepsilon}(z)=\sum_{n=1}^{\infty}\frac{f(n)\varepsilon(n)}{n^{z}}$ is holomorphic on $Re(z)\geq 1$. 

Since $a(t)>0$, and $\forall m,n\in\mathbb{N}$, $(m,N)=1$, $(n,N)=1$,
$$
\frac{a(t m^{2})}{\chi(m)}\frac{a(t n^{2})}{\chi(n)}=a(t)\frac{a(t m^{2} n^{2})}{\chi(mn)},
$$
then $f(n)$ is multiplicative. 

Applying \cite[Lemma 2.1.2]{ilker2}, we obtain 
$$
\text{log } G_{\varepsilon}(z)=\sum_{p\in\mathbb{P}}\frac{f(p)\varepsilon(p)}{p^{z}}+g(z),
$$
where $g(z)$ is a function that is holomorphic on $Re(z)>\frac{1}{2}$. Hence
\begin{align*}
\text{log } G_{\varepsilon}(z)&=\sum_{p\in\mathbb{P}}\frac{f(p)\varepsilon(p)}{p^{z}}+g(z)\\
                      &=\sum_{\xi\in\,Im(\varepsilon)}\xi\sum_{p\in\mathbb{P}_{\varepsilon,\xi}}\frac{f(p)}{p^{z}}+g(z)\\
                      &=\sum_{\xi\in\,Im(\varepsilon)}\xi\left(\sum_{p\in\mathbb{P}_{\varepsilon,\xi,>}}\frac{1}{p^{z}}-\sum_{p\in\mathbb{P}_{\varepsilon,\xi,<}}\frac{1}{p^{z}}\right)+g(z).
\end{align*}
The sets $\mathbb{P}_{\varepsilon,\xi,>}$ and $\mathbb{P}_{\varepsilon,\xi,<}$ are regular sets of primes, and they have the same density $\frac{1}{2\# Im\,\varepsilon}$ by \thref{thm4}. Therefore by \thref{thmdens}, $\text{log } G_{\varepsilon}(z)$ is holomorphic on $R(z)\geq 1$, and consequently $G_{\varepsilon}(z)$ is also holomorphic.
\end{proof}

\begin{lem}\thlabel{leme1}
We use the assumptions fixed in the introduction and suppose that $F_{t}$ has no CM. Let $q$ be a natural number. Suppose that for all Dirichlet characters $\varepsilon\pmod q$ and all roots of unity $\xi$ such that $\xi\in\,Im\,\varepsilon$, there are $C_{\varepsilon,\xi}>0$ and $\alpha_{\varepsilon ,\xi}>0$ such that 
\begin{equation}
\left|\frac{\#\left\{p\leq x\text{ prime}\mid p\nmid N, \varepsilon(p)=\xi, \frac{A_{t}(p)}{2 a(t)p^{\frac{k-1}{2}}\chi(p)}\in [a,b]\right\}}{\pi(x)}-\frac{\mu([a,b])}{\# Im\,\varepsilon}\right| \leq \frac{C_{\varepsilon,\xi}}{x^{\alpha_{\varepsilon,\xi}}}. 
\end{equation}
Then for all integers $d$, $(d,q)=1$, the set 
$$
\{n\in\mathbb{N}\mid (n,N)=1, n\equiv d\text{ mod }q, a(t n^{2})\neq 0\}
$$
has natural density.
\end{lem}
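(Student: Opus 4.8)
The plan is to mirror the proof of \thref{thm6}: encode the set in question, restricted to the residue class, as the support of a bounded multiplicative function, push the associated Dirichlet series onto the line $Re(z)=1$, and then apply a Tauberian theorem. Define $g\colon\mathbb N\to\{0,1\}$ by $g(n)=1$ if $(n,N)=1$ and $a(tn^{2})\neq 0$, and $g(n)=0$ otherwise; then the set in the statement is exactly $\{n\equiv d\bmod q:g(n)=1\}$, so it suffices to show $\sum_{n\le x,\,n\equiv d\bmod q}g(n)\sim c\,x$ for some constant $c\ge 0$. First I would check that $g$ is multiplicative: for coprime $m,n$ with $(mn,N)=1$, the identity $\frac{a(tm^{2})}{\chi(m)}\frac{a(tn^{2})}{\chi(n)}=a(t)\frac{a(tm^{2}n^{2})}{\chi(mn)}$ from the proof of \thref{thm6}, together with $a(t)\neq 0$, shows $a(tm^{2}n^{2})\neq 0$ if and only if $a(tm^{2})\neq 0$ and $a(tn^{2})\neq 0$; and $g(mn)=0=g(m)g(n)$ when $(mn,N)\neq 1$. (Only $a(t)\neq 0$ is used, so the statement needs no positivity assumption on $a(t)$.)

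For each Dirichlet character $\varepsilon\bmod q$ set $G_{\varepsilon}(z)=\sum_{n\ge 1}g(n)\varepsilon(n)n^{-z}$, absolutely convergent for $Re(z)>1$. Orthogonality of characters gives $\sum_{n\equiv d\bmod q}g(n)n^{-z}=\frac1{\varphi(q)}\sum_{\varepsilon\bmod q}\overline{\varepsilon(d)}\,G_{\varepsilon}(z)$, and since $(d,q)=1$ the principal character $\varepsilon_{0}$ contributes $\frac1{\varphi(q)}G_{\varepsilon_{0}}(z)$. As $g\varepsilon$ is multiplicative, \cite[Lemma 2.1.2]{ilker2} gives $\log G_{\varepsilon}(z)=\sum_{p}\frac{g(p)\varepsilon(p)}{p^{z}}+h_{\varepsilon}(z)$ with $h_{\varepsilon}$ holomorphic on $Re(z)>\tfrac12$. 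Among primes $p$ with $\varepsilon(p)=\xi$ one has $g(p)=1$ precisely on $\mathbb P_{\varepsilon,\xi,>}\cup\mathbb P_{\varepsilon,\xi,<}$, so $\sum_{p}\frac{g(p)\varepsilon(p)}{p^{z}}=\sum_{\xi\in Im\,\varepsilon}\xi\left(\sum_{p\in\mathbb P_{\varepsilon,\xi,>}}\frac1{p^{z}}+\sum_{p\in\mathbb P_{\varepsilon,\xi,<}}\frac1{p^{z}}\right)$. Under the assumed error term, \thref{th5} says each of $\mathbb P_{\varepsilon,\xi,>}$ and $\mathbb P_{\varepsilon,\xi,<}$ is a regular set of primes, and \thref{thm4} says each has density $\frac1{2\# Im\,\varepsilon}$; hence $\sum_{p\in\mathbb P_{\varepsilon,\xi,>}}p^{-z}+\sum_{p\in\mathbb P_{\varepsilon,\xi,<}}p^{-z}=\frac1{\# Im\,\varepsilon}\log\frac1{z-1}+(\text{holomorphic on }Re(z)\ge 1)$, and therefore $\sum_{p}\frac{g(p)\varepsilon(p)}{p^{z}}=c_{\varepsilon}\log\frac1{z-1}+(\text{holomorphic on }Re(z)\ge 1)$ where $c_{\varepsilon}=\frac1{\# Im\,\varepsilon}\sum_{\xi\in Im\,\varepsilon}\xi$.

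Now $c_{\varepsilon}=0$ for every $\varepsilon\neq\varepsilon_{0}$ (then $Im\,\varepsilon$ is a non-trivial finite subgroup of $\mathbb C^{\times}$, so the sum of its elements vanishes), while $c_{\varepsilon_{0}}=1$. Hence $\log G_{\varepsilon}(z)$, and so $G_{\varepsilon}(z)$, is holomorphic on $Re(z)\ge 1$ for $\varepsilon\neq\varepsilon_{0}$; and $\log G_{\varepsilon_{0}}(z)=\log\frac1{z-1}+(\text{holomorphic on }Re(z)\ge 1)$, which gives $G_{\varepsilon_{0}}(z)=\psi_{0}(z)/(z-1)$ with $\psi_{0}$ holomorphic and non-vanishing on $Re(z)\ge 1$; since $G_{\varepsilon_{0}}$ has non-negative coefficients and $\log G_{\varepsilon_{0}}(z)\to+\infty$ as $z\to 1^{+}$ along the reals, $\psi_{0}(1)=\lim_{z\to1^{+}}(z-1)G_{\varepsilon_{0}}(z)>0$. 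Consequently $\sum_{n\equiv d\bmod q}g(n)n^{-z}-\dfrac{\psi_{0}(1)/\varphi(q)}{z-1}$ extends to a function holomorphic, in particular continuous, on $Re(z)\ge 1$; since its Dirichlet coefficients are non-negative, the Wiener--Ikehara theorem yields $\sum_{n\le x,\,n\equiv d\bmod q}g(n)\sim\dfrac{\psi_{0}(1)}{\varphi(q)}\,x$, so the set has natural density $\psi_{0}(1)/\varphi(q)>0$ (in particular independent of $d$).

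The step I expect to be the crux is establishing that the prime sums $\sum_{p\in\mathbb P_{\varepsilon,\xi,>}}p^{-z}$ and $\sum_{p\in\mathbb P_{\varepsilon,\xi,<}}p^{-z}$ are of the form $\delta\log\frac1{z-1}$ plus a function holomorphic \emph{on the closed half-plane} $Re(z)\ge 1$ --- i.e.\ the regularity of those sets of primes --- rather than merely that they have natural densities (which only controls the behaviour along the real axis). This is exactly where the assumed error term is indispensable, feeding into \thref{th5}; without it one would recover only a Dedekind--Dirichlet density, not a natural density. Everything else --- the behaviour of the higher prime-power terms in the Euler product, the vanishing of $\sum_{\xi\in Im\,\varepsilon}\xi$ for $\varepsilon\neq\varepsilon_{0}$, the positivity of $\psi_{0}(1)$, and checking the hypotheses of Wiener--Ikehara --- is routine.
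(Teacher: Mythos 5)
Your proposal is correct and follows essentially the same route as the paper: the paper also takes the indicator function (written there as $f(n)^{2}$), splits over Dirichlet characters mod $q$, applies \cite[Lemma 2.1.2]{ilker2} to reduce to the prime sums over $\mathbb{P}_{\varepsilon,\xi,>}\cup\mathbb{P}_{\varepsilon,\xi,<}$, invokes the regularity and density statements (\thref{th5}, \thref{thm4}) to get a $\log\frac{1}{z-1}$ singularity only for the principal character, and concludes with Wiener--Ikehara. Your additional observations (that only $a(t)\neq 0$ is needed for multiplicativity of the indicator, and that the residue is positive and independent of $d$) are correct refinements consistent with the paper's Remark~\thref{rem2}.
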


\begin{proof}
We have 
$$
\sum_{\substack{n\geq 1\\n\equiv d\text{ mod }q}}\frac{f(n)^{2}}{n^{z}}=\frac{1}{\varphi(q)}\sum_{\varepsilon\text{ mod }q}\left(\sum_{n=1}^{\infty}\frac{f(n)^{2}\varepsilon (n)}{n^{z}}\right)\times \overline{\varepsilon (d)}.
$$
We shall define $H_{\varepsilon}(z)=\sum_{n=1}^{\infty}\frac{f(n)^{2}\varepsilon (n)}{n^{z}}$. Applying \cite[Lemma 2.1.2]{ilker2} to get
\begin{align*}
\text{log } H_{\varepsilon}(z):=&\sum_{p\in\mathbb{P}}\frac{f(p)^{2}\varepsilon(p)}{p^{z}}+g_{\varepsilon}(z)\\
=&\sum_{\xi\in\,Im\,\varepsilon}\xi\left(\sum_{p\in\mathbb{P}_{\varepsilon,\xi,>}\cup \mathbb{P}_{\varepsilon,\xi,<}}\frac{1}{p^{z}}\right)+g_{\varepsilon}(z),
\end{align*}
where $g_{\varepsilon}(z)$ is a function that is holomorphic on $Re(z)>\frac{1}{2}$. Applying \thref{th5}, the sets $\mathbb{P}_{\varepsilon,\xi,>}$ and $\mathbb{P}_{\varepsilon,\xi,<}$ are regular sets of primes of natural density $\frac{1}{2\#\,Im\,\varepsilon}$. Then 
$$
\sum_{p\in\mathbb{P}_{\varepsilon,\xi,>}\cup \mathbb{P}_{\varepsilon,\xi,<}}\frac{1}{p^{z}}=\frac{1}{\#\,Im\,\varepsilon}\text{log }\left(\frac{1}{z-1}\right)+h_{\xi}(z),
$$
where $h_{\xi}$ is a holomorphic function on $Re(z)\geq 1$. It follows that 
\begin{align*}
\text{log } H_{\varepsilon}(z):=&\sum_{\xi\in\,Im\,\varepsilon}\xi\left(\sum_{p\in\mathbb{P}_{\varepsilon,\xi,>}\cup \mathbb{P}_{\varepsilon,\xi,<}}\frac{1}{p^{z}}\right)+g_{\varepsilon}(z)\\
=&\frac{\sum_{\xi\in\,Im\,\varepsilon}\xi}{\#\,Im\,\varepsilon}\text{log }\left(\frac{1}{z-1}\right)+\sum_{\xi\in\,Im\,\varepsilon}\xi \,h_{\xi}(z)+g_{\varepsilon}(z).
\end{align*}
Thus $\text{log } H_{\varepsilon_{0}}(z)=\text{log }\left(\frac{1}{z-1}\right)+h_{1}(z)+g_{\varepsilon_{0}(z)}$ where $\varepsilon_{0}$ is the principal Dirichlet character modulo $q$, and $\text{log } H_{\varepsilon}(z)=\sum_{\xi\in\,Im\,\varepsilon}\xi \,h_{\xi}(z)+g_{\varepsilon}(z)$ when $\varepsilon\neq \varepsilon_{0}$. From this we see that in all cases, there is $b_{\varepsilon}\in\mathbb{C}$ satisfying
$$
H_{\varepsilon}(z)=\frac{b_{\varepsilon}}{z-1}+k_{\varepsilon}(z),
$$
where $k_{\varepsilon}$ is holomorphic on $Re(z)\geq 1$. Therefore 
$$
\sum_{\substack{n\geq 1\\n\equiv d\text{ mod }q}}\frac{f(n)^{2}}{n^{z}}=\frac{b}{z-1}+k(z),
$$
where $b\in\mathbb{C}$ and $k$ is holomorphic on $Re(z)\geq 1$. We can now apply Wiener-Ikehara's theorem (see\cite{kor}) to deduce the result. 

\end{proof}

\begin{rmk}\thlabel{rem2}
Notice that the natural density of the set
$$
\{n\in\mathbb{N}\mid (n,N)=1, n\equiv d\text{ mod }q, a(t n^{2})\neq 0\}
$$
is independent of the choice of $d$. Indeed, from Wiener-Ikehara's theorem we know that this density is equal to $\frac{h_{1}(1)+g_{\varepsilon_{0}}(1)}{\varphi(q)}$. 
\end{rmk}

\section{Proof of \texorpdfstring{\thref{thm71}}{Theorem 1}}

Before starting the proof, recall the theorem of Delange (see \cite{dela}).
 
\begin{thm}
Let $g:\mathbb{N}\longrightarrow \mathbb{C}$ be a multiplicative arithmetic function for which:
\begin{enumerate}
\item $\forall n\in\mathbb{N},\mid g(n)\mid \leq 1$.
\item There exists $a\in\mathbb{C}$ such that $a\neq 1$ and satisfying $\frac{\lim_{x\rightarrow +\infty}\sum_{\substack{p\text{ prime}\\p\leq x}}g(p)}{\pi(x)}=a$. 
\end{enumerate}
Then we have
$$
\frac{\lim_{x\rightarrow +\infty}\sum_{n\leq x}g(n)}{x}=0.
$$
\end{thm}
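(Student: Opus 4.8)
The plan is to study the Dirichlet series $D(z)=\sum_{n\ge 1}g(n)n^{-z}$ through its Euler product and then extract the mean value of $g$ by a Tauberian argument, in the spirit of the Selberg--Delange method. Since $\lvert g(n)\rvert\le 1$, the series converges absolutely and is holomorphic on $\Re(z)>1$, and by multiplicativity it factors as an Euler product. Taking logarithms as in \cite[Lemma 2.1.2]{ilker2} gives
$$
\log D(z)=\sum_{p\in\mathbb{P}}\frac{g(p)}{p^{z}}+R(z),
$$
where $R(z)$ collects the contribution of the higher prime powers and is holomorphic on $\Re(z)>\tfrac12$ because $\lvert g\rvert\le 1$. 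The problem is thus reduced to understanding $\sum_{p}g(p)p^{-z}$ near $z=1$.

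Next I would convert hypothesis (2), read correctly as $\tfrac{1}{\pi(x)}\sum_{p\le x}g(p)\to a$, into analytic information. Writing $\theta_{g}(x)=\sum_{p\le x}g(p)$, the assumption reads $\theta_{g}(x)=a\,\pi(x)+o(\pi(x))$. By partial summation together with the prime number theorem (which yields $\sum_{p}p^{-z}=\log\frac{1}{z-1}+O(1)$ as $z\to 1^{+}$), this gives
$$
\sum_{p\in\mathbb{P}}\frac{g(p)}{p^{z}}=\bigl(a+o(1)\bigr)\,\log\Bigl(\frac{1}{z-1}\Bigr)\qquad(z\to 1^{+}),
$$
so that, combined with the first step, $\log D(z)=(a+o(1))\log\frac{1}{z-1}$ and hence $\lvert D(z)\rvert=(z-1)^{-(\Re(a)+o(1))}$ as $z\to 1^{+}$.

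The decisive observation is a size constraint on $a$: since $\lvert g(p)\rvert\le 1$ we have $\bigl\lvert\tfrac{1}{\pi(x)}\sum_{p\le x}g(p)\bigr\rvert\le 1$, hence $\lvert a\rvert\le 1$; and $\Re(a)=1$ together with $\lvert a\rvert\le 1$ would force $a=1$, which is excluded by hypothesis. Therefore $\Re(a)<1$, and consequently
$$
\bigl\lvert (z-1)\,D(z)\bigr\rvert=(z-1)^{\,1-\Re(a)+o(1)}\longrightarrow 0\qquad(z\to 1^{+}),
$$
because $\Re(1-a)>0$. In particular $D$ has no pole at $z=1$ and grows strictly more slowly than a simple pole; this is the mechanism that will produce a \emph{vanishing} mean value.

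Finally I would feed this into a Tauberian theorem to obtain $\tfrac1x\sum_{n\le x}g(n)\to 0$. Here lies the main obstacle: the coefficients $g(n)$ are complex, so the Wiener--Ikehara theorem used elsewhere in the paper, which requires non-negative coefficients, does not apply verbatim, and one must also upgrade the real-axis estimate above to genuine control on the line $\Re(z)=1$. I would resolve this in one of two ways. The first is to invoke a Tauberian theorem valid for complex Dirichlet series whose boundary growth on $\Re(z)=1$ is $o\bigl((z-1)^{-1}\bigr)$, of Halász--Delange type, which directly yields mean value $0$. The second, more conceptual route is to apply Halász's theorem on mean values of bounded multiplicative functions: its criterion for a nonzero mean value is that $g$ ``pretends to be'' $n^{it}$ for some real $t$. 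If $t=0$ this forces $g(p)\to 1$ on average and hence $a=1$; if $t\neq 0$ the average $\tfrac{1}{\pi(x)}\sum_{p\le x}p^{it}$ oscillates and fails to converge, contradicting the existence of the limit $a$. Either way the pretentious alternative is ruled out precisely by $a\neq 1$, and Halász's theorem gives the vanishing of the mean value. The partial-summation bookkeeping and the justification of the boundary behaviour are routine; the genuine content is this Tauberian/Halász input, whose hypotheses are verified exactly by the condition $a\neq 1$.
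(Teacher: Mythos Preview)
The paper does not prove this statement at all: it is quoted verbatim as ``the theorem of Delange'' with a bare citation to \cite{dela}, and is then used as a black box in the proof of \thref{thm71}. There is therefore no ``paper's own proof'' to compare your attempt against.

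As for your sketch itself, the analytic set-up (Euler product, extracting $\sum_{p}g(p)p^{-z}$ from $\log D(z)$, and the observation that $\Re(a)<1$ because $\lvert a\rvert\le 1$ and $a\neq 1$) is sound and is indeed the beginning of Delange's original argument. The gap is exactly where you locate it, at the Tauberian step, and neither of your two proposed routes quite closes it as written. In route A you only establish $(z-1)D(z)\to 0$ as $z\to 1^{+}$ along the real axis; a Tauberian theorem of Halász--Delange type needs control on the whole line $\Re(z)=1$ (indeed, at $z=1+it_{0}$ for each real $t_{0}$), and you have not derived that from the hypothesis. In route B, your dichotomy is not watertight: the pretentious condition $\sum_{p}(1-\Re(g(p)p^{-it}))/p<\infty$ with $t=0$ does \emph{not} by itself force $\tfrac{1}{\pi(x)}\sum_{p\le x}g(p)\to 1$, because convergence of a $1/p$-weighted sum does not control the unweighted Cesàro mean over primes (one can place the ``bad'' primes in sparse but long intervals). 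Similarly, for $t\neq 0$ you need to argue that if $g$ is close to $n^{it}$ in the pretentious sense then the Cesàro mean of $g(p)$ cannot converge; this is true but requires a short argument combining the pretentious closeness with the oscillation of $\tfrac{1}{\pi(x)}\sum_{p\le x}p^{it}$, rather than just asserting it. In short, your outline points in the right direction---and is essentially how Delange and later Halász proceed---but the Tauberian input has to be stated and verified more carefully than you do here; in the paper this is all absorbed into the single citation to Delange.
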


We can now piece together the previous lemmas to prove \thref{thm71}.

\begin{proof}

We have
\begin{equation}
\sum_{\substack{1\leq n\leq x\\n\equiv d\text{ mod }q}}f(n)=\frac{1}{\varphi(q)}\sum_{\varepsilon\text{ mod }q}\left(\sum_{1\leq n\leq x}f(n)\varepsilon (n)\right)\times \overline{\varepsilon (d)}.
\end{equation}
For a Dirichlet character $\varepsilon$ modulo $q$, we have
\begin{align*}
\lim_{x\rightarrow +\infty}\frac{\sum_{1\leq p\leq x}f(p)\varepsilon (p)}{\pi(x)}=&\lim_{x\rightarrow +\infty}\sum_{\xi\in\,Im\,\varepsilon}\xi\frac{\#\{p\leq x\mid p\in\mathbb{P}_{\varepsilon,\xi,>}\}}{\pi(x)}-\\
&\xi\frac{\#\{p\leq x\mid p\in\mathbb{P}_{\varepsilon,\xi,<}\}}{\pi(x)}\\
=&0,
\end{align*}
since $\mathbb{P}_{\varepsilon,\xi,>}$ and $\mathbb{P}_{\varepsilon,\xi,<}$ have the same natural density $\frac{1}{2\#\,Im\,\varepsilon }$. Applying Delange's theorem, we get $\lim_{x\rightarrow +\infty}\frac{\sum_{1\leq n\leq x}f(n)\varepsilon (n)}{x}=0$, and consequently
$$
\lim_{x\rightarrow +\infty}\frac{\sum_{\substack{1\leq n\leq x\\n\equiv d\text{ mod }q}}f(n)}{x}=0.
$$
From which we have  

\begin{multline}\label{eq:fin1}
\lim_{x\rightarrow +\infty}\frac{\#\left\{n\leq x\mid (n,N)=1, n\equiv d\text{ mod }q, \frac{a(t n^{2})}{\chi(n)}>0\right\}}{x}-\\
\frac{\#\left\{n\leq x\mid (n,N)=1, n\equiv d\text{ mod }q,\frac{a(t n^{2})}{\chi(n)}<0\right\}}{x}=0.
\end{multline}

By \thref{leme1}, there is $b>0$ such that  

\begin{multline}\label{eq:fin2}
\lim_{x\rightarrow +\infty}\frac{\#\left\{n\leq x\mid (n,N)=1, n\equiv d\text{ mod }q, \frac{a(t n^{2})}{\chi(n)}>0\right\}}{x}+\\
\frac{\#\left\{n\leq x\mid (n,N)=1, n\equiv d\text{ mod }q,\frac{a(t n^{2})}{\chi(n)}<0\right\}}{x}=b.
\end{multline}

The result follows from \eqref{eq:fin1} and \eqref{eq:fin2}.

\end{proof}

We show finally by another method how the natural density of the set defined in \thref{leme1} is independent of $d$.  

\begin{prop}\thlabel{rem1}
Assuming the assumptions of the main theorem. Then, the natural density of the set
$$
\{n\in\mathbb{N}\mid (n,N)=1, n\equiv d\text{ mod }q, a(t n^{2})\neq 0\}
$$
is equal to 
$$
\frac{1}{\varphi(q)}\lim_{z\rightarrow 1^{+}}(z-1)\sum_{\substack{n=1\\(n,q)=1}}^{\infty}\frac{f(n)^2}{n^{z}}.
$$
\end{prop}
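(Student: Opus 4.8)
The plan is to reduce the claimed formula to the Dedekind--Dirichlet density of the set in question, which we already know exists by \thref{leme1}. Write $A_d := \{n\in\mathbb{N}\mid (n,N)=1,\ n\equiv d\text{ mod }q,\ a(t n^{2})\neq 0\}$ and recall that $f(n)^2$ is the indicator of the set $\{n\mid (n,N)=1,\ a(tn^2)\neq 0\}$, since $f(n)^2\in\{0,1\}$ with $f(n)^2=1$ exactly when $(n,N)=1$ and $a(tn^2)\neq 0$. Hence $\sum_{n\le x,\, n\equiv d(q)} f(n)^2 = \#\{n\le x\mid n\in A_d\}$, and the natural density of $A_d$ (which exists by \thref{leme1}) equals its Dedekind--Dirichlet density, namely $\lim_{z\to 1^+}(z-1)\sum_{n\equiv d(q)} f(n)^2 n^{-z}$.

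The key step is then to replace the congruence condition $n\equiv d\text{ mod }q$ by the coprimality condition $(n,q)=1$ at the level of residues. First I would expand the congruence using orthogonality of Dirichlet characters modulo $q$:
\begin{equation*}
\sum_{\substack{n\ge 1\\ n\equiv d\text{ mod }q}}\frac{f(n)^2}{n^{z}}
=\frac{1}{\varphi(q)}\sum_{\varepsilon\text{ mod }q}\overline{\varepsilon(d)}\sum_{n=1}^{\infty}\frac{f(n)^2\varepsilon(n)}{n^{z}}
=\frac{1}{\varphi(q)}\sum_{\varepsilon\text{ mod }q}\overline{\varepsilon(d)}\,H_{\varepsilon}(z),
\end{equation*}
in the notation of \thref{leme1}. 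From the proof of \thref{leme1} we have $H_{\varepsilon}(z)=\frac{b_\varepsilon}{z-1}+k_\varepsilon(z)$ with $k_\varepsilon$ holomorphic on $Re(z)\ge 1$, and moreover $b_\varepsilon=0$ for every non-principal $\varepsilon$ (since $\text{log }H_\varepsilon$ is holomorphic at $z=1$ for $\varepsilon\neq\varepsilon_0$), while $b_{\varepsilon_0}=1$. Multiplying by $(z-1)$ and letting $z\to 1^+$ kills all the $k_\varepsilon$ contributions, so only the $\varepsilon_0$ term survives and the density of $A_d$ equals $\frac{1}{\varphi(q)}\overline{\varepsilon_0(d)}\cdot 1=\frac{1}{\varphi(q)}\lim_{z\to 1^+}(z-1)H_{\varepsilon_0}(z)$. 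Finally, since $\varepsilon_0(n)$ is precisely the indicator of $(n,q)=1$, we have $H_{\varepsilon_0}(z)=\sum_{(n,q)=1} f(n)^2 n^{-z}$, giving exactly the asserted expression $\frac{1}{\varphi(q)}\lim_{z\to 1^+}(z-1)\sum_{(n,q)=1} f(n)^2 n^{-z}$.

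The only genuine subtlety — the point I would treat most carefully — is justifying the interchange of the $z\to 1^+$ limit with the finite sum over $\varepsilon$ and, more importantly, that $(z-1)k_\varepsilon(z)\to 0$; this is immediate from holomorphy of $k_\varepsilon$ on the closed half-plane $Re(z)\ge 1$, but one should note that we are taking a one-sided real limit along $z>1$ where everything is defined. Everything else is orthogonality of characters plus the structural description of $H_\varepsilon$ already extracted in the proof of \thref{leme1}. As a sanity check, combining this with \thref{rem2} recovers that the density $\frac{1}{\varphi(q)}\lim_{z\to1^+}(z-1)H_{\varepsilon_0}(z)$ is manifestly independent of $d$, which is the whole point of the proposition; indeed it agrees with the value $\frac{h_1(1)+g_{\varepsilon_0}(1)}{\varphi(q)}$ obtained there via Wiener--Ikehara.
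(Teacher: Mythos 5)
Your argument is correct and reaches the stated formula, but it takes a different route from the paper's own proof. You recycle the decomposition $H_{\varepsilon}(z)=\frac{b_{\varepsilon}}{z-1}+k_{\varepsilon}(z)$ already extracted in the proof of \thref{leme1}, observe that $b_{\varepsilon}=0$ for non-principal $\varepsilon$ (there $\log H_{\varepsilon}$ is holomorphic on $Re(z)\geq 1$, the coefficient $\sum_{\xi\in Im\,\varepsilon}\xi$ of $\log\frac{1}{z-1}$ being zero), use $\overline{\varepsilon_{0}(d)}=1$, and identify $H_{\varepsilon_{0}}$ with the sum over $(n,q)=1$; this is essentially the computation behind \thref{rem2} carried out at the level of Dedekind--Dirichlet densities. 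The paper, by contrast, presents this proposition explicitly as ``another method'': it re-derives the behaviour of $C_{\varepsilon}(z)=H_{\varepsilon}(z)$ at $z=1$ from scratch by comparing its Euler product with $L(z,\varepsilon)$, using that $\mathbb{P}_{\varepsilon,\xi,=0}$ is a regular set of primes of density $0$ (\thref{thm4}, \thref{th5}) to show that $C_{\varepsilon}(z)/L(z,\varepsilon)$ is holomorphic on $Re(z)\geq 1$, whence $\lim_{z\to 1^{+}}(z-1)C_{\varepsilon_{0}}(z)$ exists and $\lim_{z\to 1^{+}}(z-1)C_{\varepsilon}(z)=0$ for $\varepsilon\neq\varepsilon_{0}$ because $L(z,\varepsilon)$ is then holomorphic at $z=1$. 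Your version is shorter and leans on work already done; the paper's is independent of the bookkeeping inside \thref{leme1} and makes the vanishing of the non-principal contributions visible directly through the $L$-functions. One slip you should delete: the parenthetical claim $b_{\varepsilon_{0}}=1$ is false in general --- from the proof of \thref{leme1} one has $b_{\varepsilon_{0}}=\lim_{z\to 1^{+}}(z-1)H_{\varepsilon_{0}}(z)=e^{h_{1}(1)+g_{\varepsilon_{0}}(1)}$, which is the Dedekind--Dirichlet density of $\{n\mid (n,qN)=1,\ a(tn^{2})\neq 0\}$ and hence is at most the density of $\{n\mid (n,N)=1\}$, strictly less than $1$ since $4\mid N$; taken literally your ``$\frac{1}{\varphi(q)}\overline{\varepsilon_{0}(d)}\cdot 1$'' would assert the wrong value $\frac{1}{\varphi(q)}$ for the density (and the same caveat applies to the quoted value in \thref{rem2}, where the exponential appears to have been dropped). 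Fortunately nothing in your derivation of the stated identity uses $b_{\varepsilon_{0}}=1$: you only need $b_{\varepsilon}=0$ for $\varepsilon\neq\varepsilon_{0}$ together with $\overline{\varepsilon_{0}(d)}=1$, so simply remove that claim and keep the final expression $\frac{1}{\varphi(q)}\lim_{z\to 1^{+}}(z-1)H_{\varepsilon_{0}}(z)$.
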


\begin{proof}
Since $\{n\in\mathbb{N}\mid (n,N)=1, n\equiv d\text{ mod }q, a(t n^{2})\neq 0\}$ has natural density by \thref{leme1}, then it suffices to prove that the Dedekind-Dirichlet density of this set is equal to $\frac{1}{\varphi(q)}\lim_{z\rightarrow 1^{+}}(z-1)\sum_{\substack{n=1\\(n,q)=1}}^{\infty}\frac{f(n)^2}{n^{z}}$.

We shall define $B(z)=\sum_{\substack{n=1\\n\equiv d\text{ mod }q}}^{\infty}\frac{f(n)^2}{n^{z}}$ and $C_{\varepsilon }(z)=\sum_{n=1}^{\infty}\frac{f(n)^{2}\varepsilon(n)}{n^{z}}$ where $\varepsilon$ runs over Dirichlet characters modulo $q$. We must now compute $\lim_{z\rightarrow 1^{+}}(z-1)B(z)$. By the same computations as in the previous theorem, il suffices to compute $\lim_{z\rightarrow 1^{+}}(z-1)C_{\varepsilon}(z)$. We have

\begin{align*}
\frac{C_{\varepsilon}(z)}{L(z,\varepsilon)}&=\prod_{\substack{p\in\mathbb{P}}}\sum_{k=0}^{\infty}f(p^{k})^2\varepsilon(p^{k}) p^{-kz}\times\prod_{p\in\mathbb{P}}(1-\frac{\varepsilon(p)}{p^{z}})\\
                      &=\prod_{\substack{p\in\mathbb{P}}}(1-\frac{\varepsilon(p)}{p^{z}})\times\prod_{\substack{p\in\mathbb{P}}}\left(1+\sum_{\substack{k=1\\a(t p^{2k})\neq 0}}^{\infty}\frac{\varepsilon(p^{k})}{p^{kz}}\right)\\
                      &=\prod_{\substack{p\in\mathbb{P}\\a(t p^{2})\neq 0}}\left[(1-\frac{\varepsilon(p)}{p^{z}})\left(1+\frac{\varepsilon(p)}{p^{z}}+\sum_{\substack{k=2\\a(t p^{2k})\neq 0}}^{\infty}\frac{\varepsilon(p^{k})}{p^{kz}}\right)\right]\\
                      &\times \prod_{\substack{p\in\mathbb{P}\\a(t p^{2})=0}}\left[(1-\frac{\varepsilon(p)}{p^{z}})\left(1+\sum_{\substack{k=2\\a(t p^{2k})\neq 0}}^{\infty}\frac{\varepsilon(p^{k})}{p^{kz}}\right)\right]\\
                      &=\prod_{\substack{p\in\mathbb{P}\\a(t p^{2})\neq 0}}\left(1-\frac{\varepsilon(p^{2})}{p^{2z}}+h_{1}(z,p)\right)\times\prod_{\substack{p\in\mathbb{P}\\a(t p^{2})=0}}\left(1-\frac{\varepsilon(p)}{p^{z}}+h_{2}(z,p)\right),
\end{align*}

where $h_{1}(z,p)$ and $h_{2}(z,p)$ are the remaining terms. Applying logarithm to $\frac{C_{\varepsilon}(z)}{L(z,\varepsilon)}$ and notice that $\sum_{\substack{p\in\mathbb{P}\\a(t p^{2})\neq 0}}\text{log } \left(1-\frac{\varepsilon(p^{2})}{p^{2z}}+h_{1}(z,p)\right)$ is holomorphic on $Re(z)\geq 1$. On the other hand, we have $\sum_{\substack{p\in\mathbb{P}\\a(t p^{2})=0}}\text{log }\left(1-\frac{\varepsilon(p)}{p^{z}}+h_{2}(z,p)\right)=\sum_{\substack{p\in\mathbb{P}\\a(t p^{2})=0}}\frac{\varepsilon(p)}{p^{z}}+h_{3}(z,p)$ where $h_{3}(z,p)$ is holomorphic on $Re(z)\geq 1$. Further, since for all roots of unity $\xi$ such that $\xi\in\,Im\,\varepsilon$, the set $\mathbb{P}_{\varepsilon ,\xi,=0}$ is a regular set of primes of density $0$ by \thref{thm4}, then 
$$
\sum_{\substack{p\in\mathbb{P}\\a(t p^{2})=0}}\frac{\varepsilon (p)}{p^{z}}=\sum_{\xi\in\,Im\,\varepsilon }\xi\sum_{\substack{p\in\mathbb{P}_{\varepsilon,\xi,=0}}}\frac{1}{p^{z}}
$$
is also holomorphic on $Re(z)\geq 1$. Thus $\text{log } \frac{C_{\varepsilon}(z)}{L(z,\varepsilon)}$ is holomorphic on $Re(z)\geq 1$ and by taking exponential we see that $\frac{C_{\varepsilon}(z)}{L(z,\varepsilon)}$ is also holomorphic on $Re(z)\geq 1$. Then the limit $\lim_{z\rightarrow 1^{+}}(z-1)C_{\varepsilon_{0}}(z)$ exists, where $\varepsilon_{0}$ is the principal character modulo $q$, and $\lim_{z\rightarrow 1^{+}}(z-1)C_{\varepsilon}(z)=0$ when $\varepsilon\neq\varepsilon_{0}$. 

\begin{align*}
\lim_{z\rightarrow 1^{+}}(z-1)B(z)=&\frac{1}{\varphi(q)}\lim_{z\rightarrow 1^{+}}(z-1)C_{\varepsilon_{0}}(z)\\
=&\frac{1}{\varphi(q)}\lim_{z\rightarrow 1^{+}}(z-1)\sum_{\substack{n=1\\(n,q)=1}}^{\infty}\frac{f(n)^2}{n^{z}}.
\end{align*}

\end{proof}

We conclude with some related remarks.

\begin{rmk}
When $q=N$ or $(q,N)=1$, the Dedekind-Dirichlet density of the set $\{n\in\mathbb{N}\mid (n,N)=1, n\equiv d\text{ mod }q, a(t n^{2})=0\}$ exists . Indeed, we have 
$$
\lim_{z\rightarrow 1^{+}}(z-1)\sum_{\substack{n\geq 1\\ n\equiv d\text{ mod }q}}\frac{1}{n^{z}}=\frac{1}{q}.
$$
By \thref{thm6}, it follows that

\begin{equation}\label{eq:11s}
\lim_{z\rightarrow 1^{+}}(z-1)\left(2 \sum_{\substack{(n,N)=1\\\frac{a(t n^{2})}{\chi(n)}>0\\n\equiv d\text{ mod }q}}\frac{1}{n^{z}}+\sum_{\substack{(n,N)=1\\a(t n^{2})=0\\ n\equiv d\text{ mod }q}}\frac{1}{n^{z}}+\sum_{\substack{(n,N)\neq 1\\ n\equiv d\text{ mod }q}}\frac{1}{n^{z}}\right)=\frac{1}{q}\cdot
\end{equation}

Let $\chi_{0}$ be a principal character modulo $N$. We have 

\begin{align*}
\sum_{\substack{(n,N)=1\\ n\equiv d\text{ mod }q}}\frac{1}{n^{z}}=&\sum_{n\equiv d\text{ mod }q}\frac{\chi_{0}(n)}{n^{z}}\\
=&\frac{1}{\varphi(q)}\sum_{n\geq 0}\frac{\chi_{0}(n)}{n^{z}}\sum_{\varepsilon\text{ mod }q}\overline{\varepsilon(d)}\varepsilon(n)\\
=&\frac{1}{\varphi(q)}\sum_{\varepsilon\text{ mod }q}\overline{\varepsilon(d)}\sum_{n\geq 0}\frac{\chi_{0}(n)\varepsilon(n)}{n^{z}}.
\end{align*}

Following our hypothesis, if $q=N$ we consider $\chi_{0}\varepsilon$ as a character modulo $N$, if $(q,N)=1$ we consider it as a character modulo $q N$. Therefore $\lim_{z\rightarrow 1+}\sum_{\substack{(n,N)=1\\ n\equiv d\text{ mod }q}}\frac{1}{n^{z}}$ exists and thus $\lim_{z\rightarrow 1+}\sum_{\substack{(n,N)\neq 1\\ n\equiv d\text{ mod }q}}\frac{1}{n^{z}}$ also exists. Replacing this in \eqref{eq:11s} and the result follows. 
\end{rmk}

\begin{rmk}
The weaker version of \thref{thm71} could be obtained using \thref{rem1}. Indeed, in the proof of the previous proposition there is $b>0$ such that $\lim_{z\rightarrow 1^{+}}(z-1)B(z)=b$. Hence $\{n\in\mathbb{N}\mid (n,N)=1, n\equiv d\text{ mod }q\text{ and }a(t n^{2})\neq 0\}$ has a Dedekind-Dirichlet density equal to $b$. It follows from \eqref{eq:11s} that 
$$
\lim_{z\rightarrow 1^{+}}(z-1)\left(\sum_{\substack{(n,N)=1\\n\equiv d\text{ mod }q\\a(t n^{2})=0}}\frac{1}{n^{z}}+\sum_{\substack{(n,N)\neq 1\\ n\equiv d\text{ mod }q}}\frac{1}{n^{z}}\right)=\frac{1}{q}-b.
$$
Replace this in \eqref{eq:11s} to get
$$
\lim_{z\rightarrow 1^{+}}(z-1)\sum_{\substack{(n,N)=1\\n\equiv d\text{ mod }q\\\frac{a(t n^{2})}{\chi(n)}>0}}\frac{1}{n^{z}}=\frac{b}{2}
$$
The equidistribution obtained here is in terms of the Dedekind-Dirichlet density only.
\end{rmk}

\bibliographystyle{spmpsci}
\bibliography{mezsoufiane}

\end{document}